\documentclass[11pt,reqno,twoside]{amsart}
\usepackage{graphicx}
\usepackage{amssymb}
\usepackage{epstopdf}
\usepackage[asymmetric,top=2.5cm,bottom=2.8cm,left=2.38cm,right=2.38cm]{geometry}
\geometry{a4paper}
\usepackage{hyperref}
\usepackage{booktabs} 
\usepackage{array} 
\usepackage{paralist} 
\usepackage{verbatim} 
\usepackage{subfig} 
\usepackage{tabularx}
\usepackage{amsmath,amsfonts,amsthm,mathrsfs,amssymb,cite}
\usepackage[usenames]{color}
\usepackage{bm}
\usepackage{color}

\usepackage{xcolor}
\hypersetup{
	colorlinks,
	linkcolor={blue!80!black},
	urlcolor={blue!80!black},
	citecolor={blue!80!black}
}

\newtheorem{thm}{Theorem}[section]

\newtheorem{lem}{Lemma}[section]

\theoremstyle{definition}
\newtheorem{defn}{Definition}[section]
\theoremstyle{remark}

\numberwithin{equation}{section}

\newcommand{\bmf}[1]{{\mathbf{#1}}}




\title[Inverse elastic obstacle scattering problems by monotonicity method]{Inverse elastic obstacle scattering problems by monotonicity method}

\author{Mengjiao Bai}
\address{School of Mathematics, Jilin University,
	Changchun, Jilin 130012, China.}
\email{baimj24@mails.jlu.edu.cn}

\author{Huaian Diao}
\address{School of Mathematics and Key Laboratory of Symbolic Computation and Knowledge Engineering of Ministry of Education, Jilin University, Changchun, Jilin, China.}
\email{diao@jlu.edu.cn; hadiao@gmail.com}

\author{Weisheng Zhou}
\address{School of Mathematics, Jilin University,
	Changchun, Jilin 130012, China.}
\email{zhouws24@mails.jlu.edu.cn}

\date{} 
\begin{document}
	\maketitle
	
	\begin{abstract}	
	
	We consider the elastic wave scattering problem involving rigid obstacles. This work addresses the inverse problem of reconstructing the position and shape of such obstacles using far-field measurements. A novel monotonicity-based approach is developed for this purpose. By factorizing the far-field operator and utilizing the existence of localized wave functions, we derive a shape characterization criterion for the obstacle boundary. The proposed method employs monotonicity tests to determine the geometric relationship between any given test domain and the actual scatterer. As a result, the shape and location of rigid elastic obstacles can be uniquely identified without requiring any initial guesses or prior knowledge of the physical parameters of the homogeneous background medium.

		\medskip
		
		\noindent{\bf Keywords:}~~ inverse elastic obstacle problem, factorization method, localized wave functions 
		
		\medskip
		
	\end{abstract}

	\section{Introduction}
	
	\subsection{Mathematical setup}

	Let \( D \subset \mathbb{R}^2 \) be a bounded elastic obstacle such that its exterior \( \mathbb{R}^2 \setminus \overline{D} \) is connected. The elastic medium surrounding the obstacle is assumed to be homogeneous, isotropic, and occupies the entire plane \( \mathbb{R}^2 \). The background medium is characterized by the Lamé constants \( \lambda \) and \( \mu \), which satisfy the \emph{strong ellipticity conditions}:
\[
\mu > 0 \quad \text{and} \quad \lambda + \mu > 0.
\]
We assume that the medium has unit (normalized) density. The obstacle \( D \) is illuminated by an incident time-harmonic elastic plane wave \( \bmf{u}^i \), which is a superposition of a compressional (P-) wave and a shear (S-) wave. The incident wave is given by
\begin{align}\label{eq:ui}
	\bmf{u}^i(\mathbf{x}) = a_p \mathbf{d} \, e^{\mathrm{i} k_p \mathbf{d} \cdot \mathbf{x}} + a_s \mathbf{d}^\perp \, e^{\mathrm{i} k_s \mathbf{d} \cdot \mathbf{x}},
\end{align}
where \( \mathbf{d}, \mathbf{d}^\perp \in \mathbb{S}^1 \) are unit vectors such that \( \mathbf{d} \) is the direction of wave propagation, and \( \mathbf{d}^\perp \) is orthogonal to \( \mathbf{d} \). The coefficients \( a_p, a_s \in \mathbb{C} \) are not simultaneously zero, i.e., \( (a_p, a_s) \neq (0, 0) \). The wavenumbers for the compressional and shear waves are defined by
\[
k_p = \frac{\omega}{\sqrt{\lambda + 2\mu}}, \quad k_s = \frac{\omega}{\sqrt{\mu}},
\]
where \( \omega > 0 \) is the angular frequency. It can be directly verified that \( \bmf{u}^i \) satisfies the \emph{two-dimensional Navier equation} in \( \mathbb{R}^2 \):
\begin{equation}
	\mu \Delta \bmf{u}^i + (\lambda + \mu) \nabla \nabla \cdot \bmf{u}^i + \omega^2 \bmf{u}^i = \bmf{0}.
\end{equation}

The interaction of the incident wave \( \bmf{u}^i \) with the obstacle \( D \) gives rise to a scattered field \( \bmf{u} \), which also satisfies the Navier equation in the exterior domain:
\begin{equation}\label{eq:syst1}
	\mu \Delta \bmf{u} + (\lambda + \mu) \nabla \nabla \cdot \bmf{u} + \omega^2 \bmf{u} = \bmf{0}, \quad \text{in } \mathbb{R}^2 \setminus \overline{D}.
\end{equation}
The \emph{total field} is defined as the sum of the incident and scattered fields:
\[
\bmf{u}^{\mathrm{tot}} := \bmf{u}^i + \bmf{u}.
\]
In this paper, we consider the case where the obstacle \( D \) is \emph{impenetrable and rigid}. This implies that the total displacement field vanishes on the boundary of the obstacle:
\begin{equation}\label{eq:BoundaryCondition}
	\bmf{u}^{\mathrm{tot}} = \bmf{0}, \quad \text{on } \partial D.
\end{equation}

	Additionally, the scattered field $\mathbf u$ is required to satisfy the Kupradze radiation condition
	\begin{equation}\label{eq:RadiationCondition}
		\lim_{r\to\infty}r^\frac{1}{2}\left(\frac{\partial\bmf{u}_p}{\partial r}-{\rm i}k_p\bmf{u}_p\right)=0, \quad   	\lim_{r\to\infty}r^\frac{1}{2}\left(\frac{\partial\bmf{u}_s}{\partial r}-{\rm i}k_s\bmf{u}_s\right)=0, \ r:=|\bmf{x}|,
	\end{equation}
	where 
	$$
	\bmf{u}_p=-\frac{1}{k_p^2}\nabla \nabla \cdot \mathbf u,\quad \bmf{u}_s=-\frac{1}{k_s^2}\mathbf{curl}curl\mathbf u,
	$$
	are known as the compressional and shear wave components of $\mathbf u$ respectively.
	Given a vector function $\mathbf u:= \left( u_1, u_2\right)^{\top} $ and a scalar function $u$, the two-dimensional $\mathbf{curl}$ and curl operators is defined as follows
	\begin{equation}\label{eq:curl}
		{\rm curl}~ \bm{u}=\partial_1u_2-\partial_2u_1, \quad {\bf curl}~{u} =(\partial_1 u,-\partial_2 u)^\top.
	\end{equation}
	Then, we define the Helmholtz decomposition of the scattered wave $\bmf{u}$ in \eqref{eq:BoundaryCondition} as follows
	$$
	\bmf {u}=\bmf{u}_p+\bmf{u}_s.
	$$ 
	
	It is well-known that the scattered field satisfying the Kupradze radiation condition has the following asymptotic behavior
	\begin{equation}\label{eq:asymptotic}
	\bmf{u}(\bmf{x})= \frac{{e}^{\mathrm{i} k_{p} |\mathbf x|}}{\sqrt{|\mathbf x|}} \mathbf u_{p}^{\infty}(\hat{\bmf{x} } )+\frac{{e}^{\mathrm{i} k_{s} |\mathbf x|}}{\sqrt{|\mathbf x|} } \mathbf u_{s}^{\infty}(\hat{\bmf{x} }) +O\left(\frac{1}{|\mathbf x|^{3 / 2}}\right) ,\quad |\mathbf x|\to \infty,
    \end{equation}
	where $\mathbf u_{p}^{\infty}$ and $\mathbf u_{s}^{\infty}$ defined on the unit sphere $\mathbb {S}:=\left\lbrace \hat{\mathbf x} \in \mathbb R^2 \big | |\hat{\mathbf x} |=1\right\rbrace$ are vector-valued analytic functions and referred to the far-field pattern of $\mathbf u_{p}$ and $\mathbf u_{s}$ respectively. We typically consider ($\mathbf u_{p}^{\infty}, \mathbf u_{s}^{\infty}$) as an element in $\left[ L^2(\mathbb S)\right] ^2$ space, and denote by $\mathbf {u}^{\infty}\left( \cdot, \mathbf d, a_p,a_s\right)$ the far-field pattern associated with the scattered field produced by incident waves of the form $\eqref{eq:ui}$. In this work, our goal is to reconstruct the shape of the rigid scatterer from the far-field pattern $\mathbf {u}^{\infty}\left( \cdot, \mathbf d, a_p,a_s\right)$.


\subsection{Connections to previous results and main findings}

Inverse scattering problems have been widely applied in various fields such as oil exploration and medical imaging. Over the past three decades, numerical methods for solving inverse scattering problems have undergone rapid development. Addressing inverse scattering problems typically requires overcoming their inherent nonlinearity and ill-posedness. To this end, iterative methods were initially proposed \cite{Roge81}. However, these methods typically require prior knowledge of certain a priori information and necessitate solving a forward problem at each iteration step, which significantly increases computational costs. To enhance computational efficiency, non-iterative methods have been introduced, among which sampling-type methods have garnered considerable attention due to their computational efficiency and ease of implementation. Examples of such methods include the direct sampling method \cite{Aren01}, linear sampling method \cite{CK96}, factorization method \cite{HKS12} and monotonicity method \cite{TR02,BU13}. The factorization method, first introduced by  Arens \cite{Aren01} for the theoretical analysis of two-dimensional rigid obstacle scattering. Additionally, the factorization method has been extended to penetrable scatterers \cite{CKAKGDK06} and crack detection \cite{JQG18}.

Monotonicity methods, as a distinct category of non-iterative techniques, was originally introduced for solving electrical impedance tomography (EIT) problems \cite{TR02}, where the authors developed a non-iterative inversion technique based on the monotonicity of the resistance matrix. Later, Harrach and Ullrich proposed a shape reconstruction method for EIT by leveraging the monotonicity of the Neumann-to-Dirichlet (NtD) operator with respect to conductivity \cite{BU13}. In recent years, researchers further applied the monotonicity method to the reconstruction of elastic inclusions, proposing normalized and linearized monotonicity methods to recover the inhomogeneities in Lam\'e parameters and density relative to the background medium using the monotonicity of the NtD operator\cite{EH21,EH23,EH24}. It is noteworthy that in these monotonicity-based methods for elastic inclusion reconstruction, authors typically rely on the monotonicity of the NtD operator to reconstruct inhomogeneities within the inclusions. Meanwhile, we note that references \cite{AG20,AG23} proposed an innovative scheme based on the monotonicity method, achieving position reconstruction of acoustic and electromagnetic wave scatterers using far-field data. The implementation of this monotonicity method relies on both the advancement of the factorization method and the existence of localized wave functions—specifically, the existence of a sequence of functions that can attain arbitrarily large norms in certain designated regions while exhibiting arbitrarily small norms in other designated regions.

In this study, we aim to reconstruct the shape of a rigid obstacle from far-field measurements using a monotonicity-based approach. By leveraging the factorization of the far-field operator and the existence of localized wave functions, we establish a shape characterization theorem for elastic obstacles. Specifically, the reconstruction is achieved by computing the number of positive (negative) eigenvalues of a linear combination of the far-field operator and a probe operator. This allows us to determine whether a test domain lies inside the obstacle, ultimately leading to a complete shape reconstruction.

Finally, the remainder of the paper is structured as follows. In Section \ref{sec:far-field Operator decomposition}, we present a factorization result for the far-field operator associated with rigid elastic scatterers and define key operators and notations used in subsequent proofs. Section \ref{sec:LocalizedWaveFunction} is devoted to proving the existence of localized wave functions, i.e., constructing a function sequence that diverges within a specified region while vanishing elsewhere. In Section \ref{sec:Characterization of the obstacle}, we establish a shape characterization theorem for the reconstruction of rigid elastic scatterers, demonstrating that the shape of an impenetrable obstacle can be approximately reconstructed by computing the numbers of positive and negative eigenvalues of a specific operator combination.

	\section{Decomposition of The Far-field Operator}\label{sec:far-field Operator decomposition}
	This section presents the key properties of the single-layer potential operator, the factorization theorem for the far-field operator associated with rigid elastic scatterers, and an estimate for the Data-to-pattern operator. Although the properties of the single-layer potential operator and the factorization theorem of the far-field operator can be found in \cite{Aren01}, we restate these known results here to ensure clarity in subsequent proofs. Furthermore, to elucidate the connections between the Herglotz operator, the single-layer potential operator, and the Data-to-pattern operator, we provide a detailed proof of the far-field operator factorization theorem. Finally, we establish an estimate for the Data-to-pattern operator, which lays the theoretical foundation for further analysis.
	
	Before introducing the far-field operator, we will first present several Hilbert spaces that may be utilized in our discussion. For a bounded domain $D$ of class $C^2$, let $\left[L^2\left(\partial D\right) \right] ^2$ and $\left[H^{1/2} \left(\partial D\right) \right] ^2$ denote the usual Sobolev spaces of two-dimensional vector fields defined on $\partial D$. Let $\mathbf{g}=\left( g_p,g_s\right)$, $\mathbf{h}=\left( h_p,h_s\right)$, $\mathbf{g},\mathbf{h} \in \left[L^2\left(\mathbb S\right) \right] ^2$, the inner product on the Hilbert space $\left[L^2\left(\mathbb S\right) \right] ^2$  is defined as
	\begin{align}\label{eq:InnerProduct}
		\left\langle \mathbf{g},\mathbf{h}\right\rangle := \frac{\omega}{k_p}\int_{\partial D} g_p(\mathbf d) \overline{h_p(\mathbf d)}ds(\mathbf d)+\frac{\omega}{k_s}\int_{\partial D} g_s(\mathbf d) \overline{h_s(\mathbf d)}ds(\mathbf d).
	\end{align}
	We usually denote the dual space of  $\left[H^{1/2}\left(\partial D\right) \right] ^2$ as  $\left[H^{-1/2}\left(\partial D\right) \right] ^2$ with respect to inner product in $\left[L^2\left(\partial D\right) \right] ^2$ and their dual pairing is denoted as $\left( \cdot,\cdot\right) $.
	
	For a given $\mathbf{g}=\left( g_p, g_s\right)  \in \left[L^2\left(\mathbb S\right) \right] ^2$ , the elastic Herglotz wavefunction with density $\mathbf{g}$ is defined as
	\begin{align}\label{eq:HorglotzWave}
		\mathbf{v_g}(\mathbf{x}):=e^{-i\pi /4} \int_{\mathbb S}  \left\lbrace \sqrt{\frac{k_p}{\omega}}\mathbf{d} e^{ik_p\mathbf{d}\cdot\mathbf{x}}g_p(\mathbf{d})+\sqrt{\frac{k_s}{\omega}}\mathbf{d}^{\perp} e^{ik_s\mathbf{d}\cdot\mathbf{x}}g_s(\mathbf{d})\right\rbrace ds(\mathbf{d}), \qquad \mathbf{x}\in\mathbb{R}^2,
	\end{align}
   where it is evident that the Herglotz wavefunction can be regarded as a superposition of plane waves of the form $\eqref{eq:ui}$. With this definition, we can now proceed to introduce the far-field operator $\mathbf{F}$, which maps the incident Herglotz wave density $\mathbf{g}$ to the far-field pattern of the scattered wave.
		
	For $\hat{\mathbf x}\in \mathbb{S}$, the far-field operator $\mathbf{F}$: $\left[L^2\left(\mathbb S\right) \right] ^2 \to \left[L^2\left(\mathbb S\right) \right] ^2$ is defined as
	\begin{equation}\begin{aligned}\label{eq:FarfieldOperator}
		\mathbf{Fg}(\hat{\mathbf x})
		 & :=e^{-i\pi /4} \int_{\mathbb S} \mathbf{u}^{\infty} \left( \hat{\mathbf x}, \mathbf d, \sqrt{\frac{k_p}{\omega}}g_p(\mathbf d),\sqrt{\frac{k_s}{\omega}}g_s(\mathbf d) \right) ds(\mathbf d)\\
		& =e^{-i\pi /4} \int_{\mathbb S}\left\lbrace \sqrt{\frac{k_p}{\omega}}\mathbf{u}^\infty (\hat{\mathbf x},\mathbf d,1,0)g_p(\mathbf d)+\sqrt{\frac{k_s}{\omega}}\mathbf{u}^\infty (\hat{\mathbf x},\mathbf d,0,1)g_s(\mathbf d)\right\rbrace ds(\mathbf d).
	\end{aligned}\end{equation}
    Here, $\mathbf{u}^\infty(\hat{\mathbf{x}},\mathbf{d},1,0)$ and $\mathbf{u}^\infty(\hat{\mathbf{x}},\mathbf{d},0,1)$ represent the far-field patterns generated by purely compressional  and purely shear incident waves, respectively.
	
	Before formally presenting the factorization of the far-field operator, we need to introduce the single-layer potential operator $\mathbf S$, the Herglotz wave operator $\mathbf H$ and the data-to-pattern operator $\mathbf G$. Here, we first provide the definition of the single-layer potential operator. We define the single-layer potential operator $\mathbf{S}:\left[H^{-1/2}\left(\partial D\right) \right] ^2 \to \left[H^{1/2}\left(\partial D\right) \right] ^2$ on $\partial D$ as follows
	\begin{align}\label{eq:SingleLayerOperater}
		\mathbf S \phi \left( \mathbf x\right) :=\int_{\partial D} \mathbb G \left( \mathbf x, \mathbf y\right) \phi\left(\mathbf y\right) ds\left( \mathbf y\right) , \qquad \mathbf x\in \partial D,
	\end{align}
	where $\mathbb G \left( \mathbf x, \mathbf y\right)$ is the Green's tensor of the Navier equation, i.e.,
	$$
	\mathbb G \left( \mathbf x, \mathbf y\right):=\frac{i}{4\mu} H_0^{\left( 1\right) }\left( k_s|\mathbf x -\mathbf y|\right) \mathbf I_2 +\frac{i}{4\omega^2} \nabla_x^{\top} \nabla_x \left( H_0^{\left( 1\right) }\left( k_s|\mathbf x -\mathbf y|\right)-H_0^{\left( 1\right) }\left( k_p|\mathbf x -\mathbf y|\right)\right) .
	$$
	Here, $\mathbf I_2$ is the $2 \times 2$ identity matrix, $H_0^{\left( 1\right) }$ is the Hankel function of the first kind and of order 0.
	
	The following properties of the single-layer potential operator $\mathbf S$, which have been established in \cite{Aren01}, are presented below. Here, $\mathbf{S}_i$ denotes the far-field operator corresponding to the angular frequency $\omega = i$.
	 \begin{lem}\label{SProperty}
	 	Assume $\omega^2$ is not a Dirichlet eigenvalue of $-\triangle^*$ in $D$,
	 	\begin{itemize}
	 		\item [(1).] $\mathbf S$ is an isomorphism from the Sobolev space $\left[H^{-1/2}\left(\partial D\right) \right] ^2 $ onto $\left[H^{1/2}\left(\partial D\right) \right] ^2$.
	 		\item [(2).] $Im\left( \phi,\mathbf S\phi\right) =0$ for some $\phi \in \left[H^{1/2}\left(\partial D\right) \right] ^2$ implies $\phi=0$.
	 		\item [(3).] The operator $\mathbf S_i$ is compact, self adjoint, and positive defined  in $\left[L^2\left(\partial D\right) \right] ^2 $. Moreover, $\mathbf S_i$ is coercive as an operator from $\left[H^{-1/2}\left(\partial D\right) \right] ^2 $ onto $\left[H^{1/2}\left(\partial D\right) \right] ^2$, i.e. there exists $c_0>0$ such that
	 		$$
	 		\left( \phi,\mathbf S_i \phi\right) \geq c_0\Vert \phi\Vert^2_{\left[H^{-1/2}\left(\partial D\right) \right] ^2}, \qquad \phi \in \left[H^{-1/2}\left(\partial D\right) \right] ^2.
	 		$$
	 		Furthermore, there exists a self adjoint and positive definite square root $\mathbf S_{i}^{1/2}$ of $\mathbf S_i$ and $\mathbf S_{i}^{1/2}$ is an isomorphism from $\left[H^{-1/2}\left(\partial D\right) \right] ^2 $ onto $\left[L^2\left(\partial D\right) \right] ^2 $ and from $\left[L^2\left(\partial D\right) \right] ^2 $ onto $\left[H^{1/2}\left(\partial D\right) \right] ^2 $.
	 		\item [(4).] The difference $\mathbf S - \mathbf S_i$ is compact from $\left[H^{-1/2}\left(\partial D\right) \right] ^2 $ to $\left[H^{1/2}\left(\partial D\right) \right] ^2$.
	 	\end{itemize}
	 \end{lem}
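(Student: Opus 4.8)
The plan is to place all four statements inside the classical layer-potential calculus for the Lam\'e (Navier) system, using the jump relations for the single-layer potential together with Betti's integration-by-parts formula, and to single out the frequency $\omega=\mathrm{i}$ as a reference at which the operator becomes self-adjoint and coercive. Throughout I write $\triangle^{*}:=\mu\Delta+(\lambda+\mu)\nabla\nabla\cdot$ for the Lam\'e operator, denote by $T$ the boundary traction (conormal derivative) operator, and let $\mathbf{u}$ be the single-layer potential with density $\phi$, so that its (two-sided) boundary trace is $\mathbf{u}|_{\partial D}=\mathbf{S}\phi$. The structural statements (3) and (4) are logically prior: they exhibit $\mathbf{S}_{i}$ as an invertible positive reference operator and show that $\mathbf{S}$ differs from it only by a smoothing (compact) term, which is exactly what the Fredholm argument for (1) needs. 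Statements (1) and (2) then reduce to uniqueness theorems for the interior Dirichlet problem and the radiating exterior problem.

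For (3) I would exploit that at $\omega=\mathrm{i}$ the wavenumbers $k_p,k_s$ are purely imaginary, so the Hankel functions $H_0^{(1)}(k_{p,s}|\mathbf{x}-\mathbf{y}|)$ become, up to constants, modified Bessel functions $K_0$, producing a real, symmetric, exponentially decaying kernel with $\mathbb{G}_i(\mathbf{x},\mathbf{y})=\mathbb{G}_i(\mathbf{y},\mathbf{x})^{\top}$; this yields self-adjointness of $\mathbf{S}_i$ on $\left[L^2(\partial D)\right]^2$. Compactness on $\left[L^2(\partial D)\right]^2$ follows because $\mathbf{S}_i:\left[H^{-1/2}\right]^2\to\left[H^{1/2}\right]^2$ is bounded (a pseudodifferential operator of order $-1$) and the embedding $\left[H^{1/2}(\partial D)\right]^2\hookrightarrow\left[L^2(\partial D)\right]^2$ is compact since $\partial D$ is $C^2$. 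Positivity and coercivity I would obtain from Betti's formula: off $\partial D$ the potential solves the coercive elliptic system $\triangle^{*}\mathbf{u}-\mathbf{u}=\mathbf{0}$, and integrating over $D$ and $\mathbb{R}^2\setminus\overline{D}$ (the circle at infinity contributing nothing by exponential decay), then using continuity of $\mathbf{u}$ and the traction jump $T\mathbf{u}|_- - T\mathbf{u}|_+=\phi$, gives
\[
(\phi,\mathbf{S}_i\phi)=\int_{\mathbb{R}^2}\Big(2\mu\,|\varepsilon(\mathbf{u})|^2+\lambda\,|\nabla\cdot\mathbf{u}|^2+|\mathbf{u}|^2\Big)\,d\mathbf{x},
\]
with $\varepsilon(\mathbf{u})=\frac{1}{2}(\nabla\mathbf{u}+(\nabla\mathbf{u})^{\top})$; under the strong ellipticity conditions this is a positive quadratic form, and a Korn/G\aa rding estimate combined with the trace inequality upgrades positivity to the bound $(\phi,\mathbf{S}_i\phi)\geq c_0\|\phi\|^2_{\left[H^{-1/2}(\partial D)\right]^2}$. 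The self-adjoint positive square root $\mathbf{S}_i^{1/2}$ and its stated mapping properties then follow from the spectral theorem together with the half-order smoothing of $\mathbf{S}_i$. For (4), I compare kernels: the logarithmic (order $-1$) principal singularity of $\mathbb{G}$ is governed solely by the top-order part $\triangle^{*}$ and so coincides with that of $\mathbb{G}_i$; hence $\mathbb{G}-\mathbb{G}_i$ has a strictly milder singularity and defines an operator of order $\leq -2$, so $\mathbf{S}-\mathbf{S}_i$ maps $\left[H^{-1/2}\right]^2$ boundedly into $\left[H^{3/2}\right]^2$ and is compact by the embedding $\left[H^{3/2}\right]^2\hookrightarrow\left[H^{1/2}\right]^2$.

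For (1), boundedness is the order-$(-1)$ mapping property. By (3) and (4), $\mathbf{S}=\mathbf{S}_i+(\mathbf{S}-\mathbf{S}_i)$ is a compact perturbation of an isomorphism, hence Fredholm of index zero, so it suffices to prove injectivity. If $\mathbf{S}\phi=\mathbf{0}$, then the potential $\mathbf{u}$ has zero interior Dirichlet trace; since $\omega^2$ is not a Dirichlet eigenvalue of $-\triangle^{*}$ in $D$, we get $\mathbf{u}\equiv\mathbf{0}$ in $D$, while in the exterior $\mathbf{u}$ is radiating with zero trace, so uniqueness of the exterior problem yields $\mathbf{u}\equiv\mathbf{0}$ there as well; the traction jump $T\mathbf{u}|_- - T\mathbf{u}|_+=\phi$ then forces $\phi=0$. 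This establishes injectivity and therefore the isomorphism.

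Finally, for (2) I would again use Betti's formula, now at real $\omega$, writing $\mathrm{Im}(\phi,\mathbf{S}\phi)$ as a radiated-energy flux through a large circle; letting the radius tend to infinity and inserting the asymptotics \eqref{eq:asymptotic} yields $\mathrm{Im}(\phi,\mathbf{S}\phi)=c_p\|\mathbf{u}_p^{\infty}\|^2_{L^2(\mathbb{S})}+c_s\|\mathbf{u}_s^{\infty}\|^2_{L^2(\mathbb{S})}$ with $c_p,c_s>0$. Vanishing of the left-hand side forces both far-field patterns to vanish, so by Rellich's lemma and unique continuation the exterior potential vanishes, and the argument of (1) (interior Dirichlet uniqueness plus the traction jump) gives $\phi=0$. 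I expect the main obstacles to be the coercivity estimate in (3)—selecting the correct energy form and combining Korn's inequality with the trace theorem to reach the $H^{-1/2}$ norm—and the precise identification of the constants $c_p,c_s$ in the far-field energy identity of (2), which requires careful bookkeeping of the compressional/shear splitting and the weights appearing in the inner product \eqref{eq:InnerProduct}.
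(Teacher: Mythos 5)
The paper itself offers no proof of this lemma: it is quoted as a known result with a citation to Arens \cite{Aren01}, so there is no in-paper argument to compare against. Your sketch reconstructs, correctly, the standard layer-potential proof that the cited source uses: Betti's formula together with the traction jump relation for the energy identities, the purely imaginary wavenumbers at $\omega=\mathrm{i}$ (turning $H_0^{(1)}$ into $K_0$) for self-adjointness, positivity and coercivity of $\mathbf S_i$, a kernel-singularity comparison for compactness of $\mathbf S-\mathbf S_i$, Riesz--Fredholm plus interior Dirichlet and exterior radiating uniqueness for the isomorphism in (1), and the far-field energy flux identity plus Rellich for (2). Two points deserve slightly more care than your outline gives them, though neither is a gap. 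First, pointwise nonnegativity of the Betti energy density $2\mu|\varepsilon(\mathbf u)|^2+\lambda|\nabla\cdot\mathbf u|^2$ is not automatic when $\lambda<0$ and must be checked against the two-dimensional condition $\lambda+\mu>0$; it does hold, since for the eigenvalues $a,b$ of $\varepsilon(\mathbf u)$ one has $2\mu(a^2+b^2)+\lambda(a+b)^2\ge 2(\lambda+\mu)(a^2+b^2)$, and only then do Korn's second inequality and the boundedness of the traction trace from $\{\mathbf u\in H^1:\triangle^*\mathbf u\in L^2\}$ into $[H^{-1/2}(\partial D)]^2$ deliver the coercivity constant $c_0$. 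Second, statement (2) as printed takes $\phi\in[H^{1/2}(\partial D)]^2$, which should read $[H^{-1/2}(\partial D)]^2$ for your (correct) flux argument to parse; your proof implicitly makes this correction.
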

	 	 Next, we introduce the Herglotz wave operator $\mathbf H: \left[L^2\left(\mathbb S\right) \right] ^2 \to \left[H^{1/2}\left(\partial D\right) \right] ^2$
	 	\begin{align}\label{eq:HerglotzOperatorD}
	 		\mathbf{Hg}\left( \mathbf x\right) :=e^{-i\pi /4} \int_{\mathbb S}  \left\lbrace \sqrt{\frac{k_p}{\omega}}\mathbf{d} e^{ik_p\mathbf{d}\cdot\mathbf{x}}g_p(\mathbf{d})+\sqrt{\frac{k_s}{\omega}}\mathbf{d}^{\perp} e^{ik_s\mathbf{d}\cdot\mathbf{x}}g_s(\mathbf{d})\right\rbrace ds(\mathbf{d}).
	 	\end{align}
	 	It is straightforward to obtain the adjoint operator of $\mathbf H$ have the following form
	 	\begin{align}\label{eq:HerglotzOperatorDA}
	 		\mathbf{H^* \phi }\left( \mathbf d\right) :=e^{i\pi /4} \int_{\partial \mathbb D}  \left\lbrace \sqrt{\frac{\omega}{k_p}}\mathbf{d} \cdot e^{-ik_p\mathbf{d}\cdot\mathbf{x}}\mathbf{\phi}\left( \mathbf{x}\right)  , \sqrt{\frac{k_s}{\omega}}\mathbf{d}^{\perp} \cdot e^{-ik_s\mathbf{d}\cdot\mathbf{x}}\mathbf{\phi}\left( \mathbf{x}\right)\right\rbrace ds(\mathbf{x}).
	 	\end{align}
	 	
	 	Finally, we introduce the data-to-pattern operator $\mathbf G$. Since the exterior Dirichlet boundary value problem of the Navier equation has a unique radiating solution, the mapping from the boundary values to the far-field is well-defined. We define this map as data-to-pattern operator $\mathbf G : \left[H^{1/2}\left(\partial D\right) \right] ^2 \to \left[L^2\left(\mathbb S\right) \right] ^2 $ by
	 	\begin{align}\label{eq:DatatoPatternOperator}
	 		\mathbf G \mathbf f:=\mathbf{u}^{\infty},
	 	\end{align}
	 	where $\mathbf{u}^{\infty}$is the far-field pattern of the solution to the exterior Dirichlet boundary value problem with boundary values $\mathbf f$. Using the existence and uniqueness of solutions to the exterior Dirichlet boundary value problem for elastic waves \cite{HH93}, we conclude that the data-to-pattern operator $\mathbf{G}: \left[H^{1/2}\left(\partial D\right) \right] ^2 \to \left[L^2\left(\mathbb S\right) \right] ^2$ is injective.
	 	
	 	Now, from \cite{Aren01} and \cite{Kress96}, we obtain the factorization of the far-field operator $\mathbf F$ and establish the relationships between the operators $\mathbf S$, $\mathbf H$ and $\mathbf G$. We present these known results in the following lemma.
	\begin{lem}\label{Fdecomposition}
		For the far-field operator $\mathbf F$,  Herglotz wave operator $\mathbf H$ and data-to-pattern operator $\mathbf G$, we have the following two results
		\begin{itemize}
		\item [(1).] The far-field operator $\mathbf F$ can be represented as
	  \begin{align}\label{eq:FarfieldOperatorDecomposition}
		\mathbf F=-\sqrt{8\pi\omega}\mathbf{G S^*G^*},
	  \end{align}
		where $\mathbf{G^*}:\left[L^2\left(\mathbb S\right) \right] ^2 \to \left[H^{-1/2}\left(\partial D\right) \right] ^2$ and $\mathbf{S^*}:\left[H^{-1/2}\left(\partial D\right) \right] ^2 \to \left[H^{1/2}\left(\partial D\right) \right] ^2$ denote the adjoints of $\mathbf G$ and $\mathbf S$, respectively. 
		\item [(2).] The operators $\mathbf S$, $\mathbf H$ and $\mathbf G$ and their adjoints satisfy the following two equations
		\begin{align}\label{eq:HGSoperator}
			\mathbf{H^*}=\sqrt{8\pi\omega}\mathbf{G S}\quad, \quad \mathbf{H}=\sqrt{8\pi\omega}\mathbf{S^* G^*}.
		\end{align}
    	\end{itemize}
	\end{lem}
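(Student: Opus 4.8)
The plan is to prove part (2)—the identities relating $\mathbf{H}$, $\mathbf{S}$ and $\mathbf{G}$—first, and then to read off the factorization (1) as an immediate consequence. The natural starting point is the observation that the far-field operator factors as $\mathbf{F} = -\mathbf{G}\mathbf{H}$. Indeed, for a density $\mathbf{g} = (g_p, g_s) \in [L^2(\mathbb{S})]^2$ the associated Herglotz wavefunction \eqref{eq:HorglotzWave} is precisely the superposition of the plane waves \eqref{eq:ui} with amplitudes $(a_p, a_s) = (\sqrt{k_p/\omega}\,g_p, \sqrt{k_s/\omega}\,g_s)$, so that by linearity of the scattering map in the incident field, $\mathbf{F}\mathbf{g}$ as defined in \eqref{eq:FarfieldOperator} is exactly the far-field pattern of the scattered field generated by $\mathbf{v_g}$. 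The rigid boundary condition \eqref{eq:BoundaryCondition} forces this scattered field to solve the exterior Dirichlet problem with boundary data $-\mathbf{v_g}|_{\partial D} = -\mathbf{H}\mathbf{g}$; applying the data-to-pattern operator \eqref{eq:DatatoPatternOperator} then yields $\mathbf{F}\mathbf{g} = \mathbf{G}(-\mathbf{H}\mathbf{g})$, i.e. $\mathbf{F} = -\mathbf{G}\mathbf{H}$, with no extra constant since all the phase and weight factors are already built into the definitions of $\mathbf{F}$ and $\mathbf{H}$.

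Next, to establish $\mathbf{H}^* = \sqrt{8\pi\omega}\,\mathbf{G}\mathbf{S}$, I would evaluate the far-field pattern of the elastic single-layer potential in two different ways. For $\phi \in [H^{-1/2}(\partial D)]^2$, set $\mathbf{w}(\mathbf{x}) = \int_{\partial D}\mathbb{G}(\mathbf{x}, \mathbf{y})\phi(\mathbf{y})\,ds(\mathbf{y})$ for $\mathbf{x}$ in the exterior domain. This $\mathbf{w}$ is a radiating Navier solution whose Dirichlet trace on $\partial D$ is $\mathbf{S}\phi$ by \eqref{eq:SingleLayerOperater}, so by the defining property of $\mathbf{G}$ its far-field pattern equals $\mathbf{G}\mathbf{S}\phi$. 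On the other hand, inserting the large-argument asymptotics of $H_0^{(1)}(k_\alpha|\mathbf{x} - \mathbf{y}|)$—and of the derivatives appearing in the gradient part of $\mathbb{G}$—into the kernel produces, after the Helmholtz splitting, a compressional far-field proportional to $\hat{\mathbf{x}}\int_{\partial D}e^{-ik_p\hat{\mathbf{x}}\cdot\mathbf{y}}(\hat{\mathbf{x}}\cdot\phi)\,ds$ and a shear far-field proportional to $\hat{\mathbf{x}}^\perp\int_{\partial D}e^{-ik_s\hat{\mathbf{x}}\cdot\mathbf{y}}(\hat{\mathbf{x}}^\perp\cdot\phi)\,ds$. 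Comparing this explicit expression term by term with the formula \eqref{eq:HerglotzOperatorDA} for $\mathbf{H}^*\phi$ shows that the two agree up to the scalar factor $\sqrt{8\pi\omega}$, giving $\mathbf{H}^* = \sqrt{8\pi\omega}\,\mathbf{G}\mathbf{S}$. Taking Hilbert-space adjoints and using $(\mathbf{G}\mathbf{S})^* = \mathbf{S}^*\mathbf{G}^*$ then yields the companion identity $\mathbf{H} = \sqrt{8\pi\omega}\,\mathbf{S}^*\mathbf{G}^*$, which is \eqref{eq:HGSoperator}.

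Finally, combining the two steps gives (1): substituting $\mathbf{H} = \sqrt{8\pi\omega}\,\mathbf{S}^*\mathbf{G}^*$ into $\mathbf{F} = -\mathbf{G}\mathbf{H}$ produces $\mathbf{F} = -\sqrt{8\pi\omega}\,\mathbf{G}\mathbf{S}^*\mathbf{G}^*$, which is \eqref{eq:FarfieldOperatorDecomposition}. The routine parts are the boundary-condition bookkeeping of the first step and the adjoint manipulation of the last; the main obstacle is the explicit far-field computation of the single-layer potential. There the difficulty is to extract the two wave components cleanly from the Green's tensor—keeping track of the differentiations in its second term, the $e^{\pm i\pi/4}$ phases, and the normalizing weights $\sqrt{k_p/\omega}$, $\sqrt{k_s/\omega}$ that are built both into the inner product \eqref{eq:InnerProduct} and into $\mathbf{H}$—so that the single constant $\sqrt{8\pi\omega}$ emerges consistently in the compressional and shear parts simultaneously. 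I would also invoke Lemma \ref{SProperty}(1) to guarantee, under the stated eigenvalue assumption, that $\mathbf{S}$ and hence $\mathbf{S}^*$ are isomorphisms between the indicated spaces, so that all the operator products above are well defined.
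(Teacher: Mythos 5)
Your proposal is correct and follows essentially the same route as the argument the paper relies on: the paper does not prove Lemma \ref{Fdecomposition} itself but cites \cite{Aren01} and \cite{Kress96}, and the proof there is exactly your chain $\mathbf F=-\mathbf G\mathbf H$, then $\mathbf H^*=\sqrt{8\pi\omega}\,\mathbf G\mathbf S$ via the far-field asymptotics of the elastic single-layer potential, then adjoints and substitution. The only point worth flagging is that the hypothesis that $\omega^2$ is not a Dirichlet eigenvalue is needed where you invoke Lemma \ref{SProperty}(1), while the identities themselves require only the well-posedness of the exterior Dirichlet problem defining $\mathbf G$.
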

	
	In the following lemma, we establish an estimate for the interaction between the operator $\mathbf{G}$ and an arbitrary compact operator $\mathbf K$, which will be essential for deriving the shape characterization through the monotonicity method.
	\begin{lem}\label{DatatoPatternProperty}
	Let $\mathbf K:\left[H^{-1/2}\left(\partial D\right) \right] ^2 \to \left[H^{1/2}\left(\partial D\right) \right] ^2$ is a compact and self and adjoint operator, then for any constant $c>0$, there exists a finite-dimensional subspace $V \subseteq \left[L^2\left(\mathbb S\right) \right] ^2$, such that
	\begin{align}\label{eq:DatatoPatternProperty}
		\left|\left( \mathbf G ^* \mathbf {\phi} ,\mathbf {KG}^* \mathbf {\phi}\right) \right| \leq c\Vert \mathbf G ^* \mathbf \phi\Vert _{\left[H^{-1/2}\left(\partial D\right) \right] ^2} ^2, \qquad \forall \mathbf {\phi} \in V^\perp.
	\end{align}
	Here $\left( \cdot,\cdot\right) $denotes the duality pairing in $\left( \left[H^{-1/2}\left(\partial D\right) \right] ^2 ,\left[H^{1/2}\left(\partial D\right) \right] ^2\right) $.
	\end{lem}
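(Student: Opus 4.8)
The plan is to control the Hermitian form $\phi \mapsto \left(\mathbf{G}^*\phi, \mathbf{K}\mathbf{G}^*\phi\right)$ by transferring $\mathbf{K}$ to a compact self-adjoint operator on $\left[L^2(\partial D)\right]^2$ via the square root $\mathbf{S}_i^{1/2}$ supplied by Lemma~\ref{SProperty}(3), and then invoking the spectral theorem to discard the finitely many eigenvalues exceeding the prescribed threshold $c$. The mechanism is that compactness forces the eigenvalues to accumulate only at $0$, so after projecting out finitely many eigendirections the remaining quadratic form is uniformly small; the subtlety is to realize that projection as a finite-dimensional constraint on the density $\phi$ rather than on $\mathbf{G}^*\phi$.

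First I would set $\psi := \mathbf{G}^*\phi \in \left[H^{-1/2}(\partial D)\right]^2$ and introduce $\widehat{\mathbf{K}} := \mathbf{S}_i^{-1/2}\mathbf{K}\mathbf{S}_i^{-1/2} : \left[L^2(\partial D)\right]^2 \to \left[L^2(\partial D)\right]^2$. Since $\mathbf{S}_i^{-1/2}$ is an isomorphism from $\left[L^2(\partial D)\right]^2$ onto $\left[H^{-1/2}(\partial D)\right]^2$ and from $\left[H^{1/2}(\partial D)\right]^2$ onto $\left[L^2(\partial D)\right]^2$, the operator $\widehat{\mathbf{K}}$ is well defined, and it is compact as the composition of the compact $\mathbf{K}$ with bounded maps. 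Using the self-adjointness and positivity of $\mathbf{S}_i^{1/2}$, which yield $\left(\chi, \mathbf{S}_i^{1/2} f\right) = \langle \mathbf{S}_i^{1/2}\chi, f\rangle_{\left[L^2(\partial D)\right]^2}$ for $\chi \in \left[H^{-1/2}(\partial D)\right]^2$ and $f \in \left[L^2(\partial D)\right]^2$, together with the self-adjointness of $\mathbf{K}$ with respect to the duality pairing, I would verify the two key identities $\left(\psi, \mathbf{K}\psi\right) = \langle \mathbf{S}_i^{1/2}\psi, \widehat{\mathbf{K}}\,\mathbf{S}_i^{1/2}\psi\rangle_{\left[L^2(\partial D)\right]^2}$ and that $\widehat{\mathbf{K}}$ is self-adjoint on $\left[L^2(\partial D)\right]^2$.

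Next I would apply the spectral theorem to the compact self-adjoint $\widehat{\mathbf{K}}$: its eigenvalues $\{\mu_n\}$ are real and accumulate only at $0$, with an orthonormal eigenbasis $\{\mathbf{e}_n\}$ of $\left[L^2(\partial D)\right]^2$. Given $c>0$, I choose $N$ so large that $\sup_{n>N}\abs{\mu_n} \le c/\Vert\mathbf{S}_i\Vert$, where $\Vert\mathbf{S}_i\Vert$ is the operator norm from $\left[H^{-1/2}(\partial D)\right]^2$ to $\left[H^{1/2}(\partial D)\right]^2$. If $\mathbf{S}_i^{1/2}\psi$ is $\left[L^2(\partial D)\right]^2$-orthogonal to $\mathbf{e}_1,\dots,\mathbf{e}_N$, the spectral bound gives $\abs{\left(\psi,\mathbf{K}\psi\right)} \le \bigl(\sup_{n>N}\abs{\mu_n}\bigr)\Vert\mathbf{S}_i^{1/2}\psi\Vert^2_{\left[L^2(\partial D)\right]^2}$; combined with $\Vert\mathbf{S}_i^{1/2}\psi\Vert^2_{\left[L^2(\partial D)\right]^2} = \left(\psi,\mathbf{S}_i\psi\right) \le \Vert\mathbf{S}_i\Vert\,\Vert\psi\Vert^2_{\left[H^{-1/2}(\partial D)\right]^2}$, the right-hand side is at most $c\,\Vert\psi\Vert^2_{\left[H^{-1/2}(\partial D)\right]^2}$, which is precisely \eqref{eq:DatatoPatternProperty}.

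It then remains to encode the orthogonality as a finite-dimensional constraint on $\phi$. The condition $\langle \mathbf{S}_i^{1/2}\mathbf{G}^*\phi, \mathbf{e}_n\rangle_{\left[L^2(\partial D)\right]^2} = 0$ can be rewritten, using the self-adjointness of $\mathbf{S}_i^{1/2}$ and the defining adjoint relation $\left(\mathbf{G}^*\phi, g\right) = \langle \phi, \mathbf{G}g\rangle_{\left[L^2(\mathbb{S})\right]^2}$, as $\langle \phi, \mathbf{G}\,\mathbf{S}_i^{1/2}\mathbf{e}_n\rangle_{\left[L^2(\mathbb{S})\right]^2} = 0$. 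Hence, setting $V := \mathrm{span}\bigl\{\mathbf{G}\,\mathbf{S}_i^{1/2}\mathbf{e}_n : 1\le n\le N\bigr\} \subseteq \left[L^2(\mathbb{S})\right]^2$, which is finite-dimensional, every $\phi \in V^\perp$ satisfies the required orthogonality and therefore \eqref{eq:DatatoPatternProperty}. The main obstacle I anticipate is the bookkeeping of three distinct pairings — the $\left[H^{-1/2}(\partial D)\right]^2$–$\left[H^{1/2}(\partial D)\right]^2$ duality, the $\left[L^2(\partial D)\right]^2$ inner product, and the $\left[L^2(\mathbb{S})\right]^2$ inner product — and, in particular, confirming rigorously that the self-adjointness of $\widehat{\mathbf{K}}$ on $\left[L^2(\partial D)\right]^2$ genuinely follows from the sesquilinear self-adjointness of $\mathbf{K}$ together with the self-adjointness and positivity of $\mathbf{S}_i^{1/2}$ from Lemma~\ref{SProperty}(3).
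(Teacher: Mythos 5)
Your proposal is correct and follows essentially the same route as the paper's proof: conjugating $\mathbf{K}$ by $\mathbf{S}_i^{-1/2}$ to obtain a compact self-adjoint operator on $\left[L^2(\partial D)\right]^2$, truncating its spectrum below a rescaled threshold, and pulling the finitely many excluded eigendirections back through $\mathbf{G}\,\mathbf{S}_i^{1/2}$ to define $V$. The only differences are cosmetic (your threshold $c/\Vert\mathbf{S}_i\Vert$ versus the paper's $c/\Vert\mathbf{S}_i^{1/2}\Vert^2$, and your slightly more careful handling of the absolute values of the eigenvalues and of the three pairings).
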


	\begin{proof}
	The eigenfunctions of a self-adjoint compact operator on $\left[L^2(\partial D)\right]^2$ form a complete orthogonal basis, this property that does not hold in Sobolev spaces $\left[H^{-1/2}\left(\partial D\right) \right] ^2$. We therefore use $\mathbf{S}_i^{1/2}$ to modify the operator domains.
	
	From lemma \ref{SProperty}, we can deduce that $\mathbf S_{i}^{1/2}$ is an isomorphism from $\left[H^{-1/2}\left(\partial D\right) \right] ^2 \to \left[L^2\left(\partial D\right) \right] ^2 ,$  thus it inverse operator $\mathbf S_{i}^{-1/2}:\left[L^2\left(\partial D\right) \right] ^2 \to \left[H^{-1/2}\left(\partial D\right) \right] ^2$  exists, and then $\mathbf S_{i}^{-1/2} \mathbf S_{i}^{1/2}$ is the identity operator from $\left[H^{-1/2}\left(\partial D\right) \right] ^2$ to $\left[H^{-1/2}\left(\partial D\right) \right] ^2$. Since $\mathbf S_{i}^{1/2}$ is a self adjoint operator, we can derived that $\mathbf S_{i}^{1/2} \mathbf S_{i}^{-1/2}$ is the identity operator from $\left[H^{1/2}\left(\partial D\right) \right] ^2$ to $\left[H^{1/2}\left(\partial D\right) \right] ^2$. Then we have
\begin{equation}\begin{aligned} \label{eq:inequality1}
	\left|\left( \mathbf G ^* \mathbf {\phi} , \mathbf {KG}^* \mathbf {\phi}\right) \right|
	 & =\left|\left( \mathbf G ^* \mathbf {\phi}, \mathbf S_{i}^{1/2} \mathbf S_{i}^{-1/2} \mathbf K \mathbf S_{i}^{-1/2} \mathbf S_{i}^{1/2} \mathbf {G}^* \mathbf {\phi}\right)\right|\\
		& =\left|\left( \mathbf S_{i}^{1/2} \mathbf {G}^* \mathbf {\phi}, \mathbf {\tilde K}\mathbf S_{i}^{1/2} \mathbf {G}^* \mathbf {\phi} \right)\right|,
	\end{aligned}\end{equation}
	where $ \mathbf {\tilde K}:=\mathbf S_{i}^{-1/2} \mathbf K \mathbf S_{i}^{-1/2}$ is a compact and self adjoint operator from $\left[L^2\left(\partial D\right) \right] ^2$ to $\left[L^2\left(\partial D\right) \right] ^2$. From the spectral theory of compact operators, we can know that  $\mathbf {\tilde K}$ has a countable number of eigenvalues, and these eigenvalues do not have a non-zero accumulation point. Since $ \mathbf {\tilde K}$ is a compact and self adjoint  operator defined on $\left[L^2\left(\partial D\right) \right] ^2$, its eigenvalues are all real numbers, and the eigenvectors corresponding to these eigenvalues form a complete orthogonal basis of $\left[L^2\left(\partial D\right) \right] ^2$. Now, we can utilize the eigenvectors of the operator $\mathbf {\tilde K}$ to find the desired subspace $V^\perp$. We denote the space $\tilde V$ which are formed by the eigenvectors corresponding to the eigenvalues of $\mathbf {\tilde K}$ that are greater than $\tilde c$, here $\tilde c:=c/{||\mathbf S^{1/2}_i||^2_{\left[H^{-1/2}\left(\partial D\right) \right] ^2 \to \left[L^2\left(\partial D\right) \right] ^2}}$, thus $\tilde V$ is finite-dimensional, and for any $\tilde v \in {\tilde V}^{\perp} $, we have the following inequality
	 \begin{align}\label{eq:inequality2}
	 	\left|\left( \tilde v,\mathbf {\tilde K} \tilde v \right) \right| \leq \tilde c \Vert \tilde v\Vert^2_{\left[L^2\left(\partial D\right) \right] ^2}.
	 \end{align}
	Let $\phi \in \left[L^2\left(\partial D\right) \right] ^2$, then from the definition of the orthogonal complement space, we know that $\mathbf S^{1/2}_i \mathbf G ^* \phi \in {\tilde V}^{\perp}$ if and only if the following equality holds
	$$
	0=\left( \mathbf S^{1/2}_i \mathbf G ^* \phi,\tilde v \right)=\left( \phi, \mathbf G \mathbf S^{1/2}_i \tilde v \right) ,\qquad \tilde v \in \tilde V.
	$$
	Therefore, $\mathbf S^{1/2}_i \mathbf G ^* \phi \in {\tilde V}^{\perp}$ if and only if $\phi \in {\mathbf G \mathbf S^{1/2}_i \tilde V}^{\perp}$, we can define $V:=\mathbf G \mathbf S^{1/2}_i \tilde V \subseteq \left[L^2\left(\mathbb S\right) \right] ^2$, then we have
	$$
	\rm {dim}\left( V \right) =\rm {dim}\left( \mathbf G \mathbf S^{1/2}_i \tilde V\right) \leq \rm {dim}\left( \tilde V\right) < \infty.
	$$
	From  \eqref{eq:inequality1} and \eqref{eq:inequality2} , for any $\phi \in V^{\perp}$ we have
\begin{equation}\begin{aligned}
	\left|\left( \mathbf G ^* \mathbf {\phi} , \mathbf {KG}^* \mathbf {\phi}\right)\right| 
	&\leq \tilde c \left\|\mathbf S^{1/2}_i \mathbf G ^* \phi\right\|^2_{\left[L^2\left(\partial D\right) \right] ^2}\\
	&\leq \tilde c \left\|\mathbf S^{1/2}_i\right\|^2_{ \left[H^{-1/2}\left(\partial D\right) \right] ^2 \to \left[L^{2}\left(\partial D\right) \right] ^2} \left\|\mathbf G ^* \phi\right\|^2_{\left[H^{-1/2}\left(\partial D\right) \right] ^2}\\
	&\leq c\left\|\mathbf G ^* \phi\right\|^2_{\left[H^{-1/2}\left(\partial D\right) \right] ^2}.
	\end{aligned}\end{equation}

	The proof is complete. 
	\end{proof}

\section{The Existence of Localized Wave Function}\label{sec:LocalizedWaveFunction}
	In this section, we primarily discuss the existence of the localized wave function, that is, we aim to prove the existence of a sequence of functions whose norm tend to infinity in a certain region and approach zero in another given region. The existence of localized wave functions plays an important role in demonstrating that a test region does not contained in obstacle. Before formally commencing the content of this section, we first introduce the following operators, which will facilitate the discussion of our topic in this section.
	
	Similar to the definition of $\eqref{eq:HerglotzOperatorD}$, we define the Herglotz wave operator $\mathbf H_B$ with its range in $\left[H^{1/2}\left(\partial B\right) \right] ^2$ are as follows
	\begin{align}\label{eq:HerglotzOperatorB}
		\mathbf H_B \mathbf g \left( \mathbf x\right) :=e^{-i\pi /4} \int_{\mathbb S}  \left\lbrace \sqrt{\frac{k_p}{\omega}}\mathbf{d} e^{ik_p\mathbf{d}\cdot\mathbf{x}}g_p(\mathbf{d})+\sqrt{\frac{k_s}{\omega}}\mathbf{d}^{\perp} e^{ik_s\mathbf{d}\cdot\mathbf{x}}g_s(\mathbf{d})\right\rbrace ds(\mathbf{d}).
	\end{align}
	We observe that the operators $\mathbf H_B$ and $\mathbf H$ differ only in their ranges.
	
   Next, we define the restriction operator $\mathbf R_{\tau}$ and its adjoint operator $\mathbf R^*_{\tau}$. Let $\tau \subseteq \partial B$ is relatively open, we define the restriction operator $\mathbf R_{\tau}:\left[H^{1/2}\left(\partial B\right) \right] ^2 \to \left[H^{1/2}\left(\tau \right) \right] ^2 $ are as follows
	\begin{align}\label{eq:RestrictionOperator}
	 \mathbf R_{\tau} \mathbf f:= \mathbf f|_{\tau}.
	\end{align}
	In order to define the adjoint of the restriction operator, we first introduce the following Sobolev space
	\begin{align}
		\left[H^{-1/2}_{supp}\left(\tau \right) \right] ^2:=\left\lbrace \mathbf f \in \left[H^{-1/2}\left(\partial B\right) \right] ^2 | supp \mathbf f \subseteq \overline{\tau}\right\rbrace .
	\end{align}
	Now, we can define it is adjoint operator $\mathbf R^*_{\tau}: \left[H^{-1/2}_{supp}\left(\tau \right) \right] ^2 \to \left[H^{-1/2}\left(\partial B\right) \right] ^2 $ as
	\begin{equation}\label{eq:RestrictionOperatorA}
		\mathbf R^*_{\tau}:=
		\begin{cases}
			\mathbf f,& \mbox{on}\quad \tau,\\
			\bmf 0, &  \mbox{on}\quad \partial B \backslash \tau .
		\end{cases}
	\end{equation}
	
	Utilizing the  Herglotz wave operator $\mathbf H_B$ and the restriction operator $\mathbf R_{\tau}$ defined above, we can define the operator $\mathbf H_{\tau}:= \mathbf R_{\tau} \mathbf H_B$, therefore, using equation $\eqref{eq:HGSoperator}$, we obtain the following result by replacing $D$ with $B$
	\begin{align}\label{eq:RestriH}
		\mathbf H^*_{\tau}=\mathbf H^*_B \mathbf R^*_{\tau}=\sqrt{8\pi\omega}\mathbf{G_B S_B}  \mathbf R^*_{\tau}.
	\end{align}
	Now, we illustrate through the following theorem that the intersection of the ranges of operators $\mathbf H^*_{\tau}$ and $\mathbf G$ can only be the zero element.
	\begin{thm}\label{RangeHG}
		Let $B, D \subset \mathbb R^2$ are open and Lipschitz bounded domain, if $B \nsubseteq D$ ,let $\tau \subseteq \partial B \setminus \overline{D} $ is relatively open and $\mathbb R^2 \setminus \left( \overline{\tau \cup D }\right) $ is connected, then we have
			\begin{align}
				R\left( \mathbf H^*_{\tau} \right) \cap R\left( \mathbf G\right) =\left\lbrace 0\right\rbrace .
			\end{align}
	\end{thm}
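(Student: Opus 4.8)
The plan is to translate the abstract range condition into a statement about two radiating elastic fields that are singular on two different sets, and then to exploit this mismatch. Let $\mathbf{g}\in R(\mathbf{H}^*_\tau)\cap R(\mathbf{G})$; I will show $\mathbf{g}=\mathbf 0$. Writing $\mathbf{g}=\mathbf{G}\mathbf{f}$ for some $\mathbf{f}\in[H^{1/2}(\partial D)]^2$, the definition \eqref{eq:DatatoPatternOperator} identifies $\mathbf{g}$ with the far-field pattern of the unique radiating solution $\mathbf{u}$ of the Navier equation in $\mathbb{R}^2\setminus\overline{D}$ with $\mathbf{u}|_{\partial D}=\mathbf{f}$. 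On the other hand, writing $\mathbf{g}=\mathbf{H}^*_\tau\bm{\psi}$ with $\bm{\psi}\in[H^{-1/2}_{supp}(\tau)]^2$ and using \eqref{eq:RestriH}, we get $\mathbf{g}=\sqrt{8\pi\omega}\,\mathbf{G}_B\mathbf{S}_B\mathbf{R}^*_\tau\bm{\psi}$. Setting $\bm{\varphi}:=\mathbf{R}^*_\tau\bm{\psi}$, which is supported in $\overline{\tau}$, the elastic single-layer potential $\mathbf{v}(\mathbf{x}):=\int_{\partial B}\mathbb{G}(\mathbf{x},\mathbf{y})\bm{\varphi}(\mathbf{y})\,ds(\mathbf{y})$ is a radiating solution of the Navier equation in $\mathbb{R}^2\setminus\overline{\tau}$ whose trace on $\partial B$ is $\mathbf{S}_B\bm{\varphi}$; by uniqueness of the exterior Dirichlet problem its far-field pattern equals $\mathbf{G}_B\mathbf{S}_B\bm{\varphi}$. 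Hence $\mathbf{u}$ and $\mathbf{v}$ have proportional far-field patterns, $\mathbf{u}^\infty=\sqrt{8\pi\omega}\,\mathbf{v}^\infty=\mathbf{g}$.

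Next I would transfer this far-field identity to the fields. Both $\mathbf{u}$ and $\sqrt{8\pi\omega}\,\mathbf{v}$ are radiating solutions of the Navier system in the open set $\Omega:=\mathbb{R}^2\setminus\overline{\tau\cup D}$, and their compressional and shear far-field patterns coincide. The elastic Rellich lemma (together with the uniqueness of the decomposition of the far-field into its $e^{\mathrm{i}k_p r}$ and $e^{\mathrm{i}k_s r}$ contributions) shows that $\mathbf{u}-\sqrt{8\pi\omega}\,\mathbf{v}$ vanishes in a neighbourhood of infinity. Since $\Omega$ is connected by hypothesis, unique continuation for the Navier equation propagates this to all of $\Omega$, yielding $\mathbf{u}=\sqrt{8\pi\omega}\,\mathbf{v}$ throughout $\Omega$. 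This is precisely the step where the connectedness of $\mathbb{R}^2\setminus\overline{\tau\cup D}$ is indispensable.

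The conclusion then follows from a regularity mismatch across $\tau$. Fix a point $\mathbf{x}_0$ in the relatively open arc $\tau\subseteq\partial B\setminus\overline{D}$ and a small ball $\mathcal{B}$ around $\mathbf{x}_0$ with $\overline{\mathcal B}\cap\overline{D}=\varnothing$. There $\mathbf{u}$ solves the Navier equation, hence is real-analytic across $\tau$, so its Dirichlet and traction traces from the two sides of $\tau$ agree. By $\mathbf{u}=\sqrt{8\pi\omega}\,\mathbf{v}$ on $\mathcal{B}\setminus\overline{\tau}$ the same holds for $\mathbf{v}$: its traction has no jump across $\tau$. But the jump relation for the elastic single-layer potential states that the traction of $\mathbf{v}$ jumps across $\partial B$ exactly by $\bm{\varphi}$, so $\bm{\varphi}$ vanishes on $\tau$. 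Since $\bm{\varphi}$ is supported in $\overline{\tau}$ and vanishes on the relatively open set $\tau$, its support lies in the relative boundary $\partial\tau$, a finite point set on the curve $\partial B$; as no nonzero element of $[H^{-1/2}(\partial B)]^2$ can be supported on a set of points, $\bm{\varphi}\equiv\mathbf 0$. Therefore $\mathbf{v}\equiv\mathbf 0$ and $\mathbf{g}=\sqrt{8\pi\omega}\,\mathbf{v}^\infty=\mathbf 0$, which gives $R(\mathbf{H}^*_\tau)\cap R(\mathbf{G})=\{\mathbf 0\}$.

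I expect the delicate part to be the jump step in the low-regularity setting: with $\bm{\psi}\in[H^{-1/2}_{supp}(\tau)]^2$ and traces living in $[H^{\pm1/2}]^2$, one must make rigorous sense of the conormal (traction) traces of $\mathbf{v}$ from each side, establish the elastic single-layer jump relation in this weak sense, and control the behaviour near the rim $\partial\tau$ (this is exactly why the support clause in the definition of $[H^{-1/2}_{supp}(\tau)]^2$ and the distributional argument are needed). By contrast, the elastic Rellich lemma and unique continuation for the Navier system are standard, though it is worth stating precisely that simultaneous matching of the $P$- and $S$-far-field patterns forces coincidence of the fields.
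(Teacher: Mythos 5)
Your proposal is correct and coincides with the paper's proof through its first two stages: you identify the common element $\mathbf g$ with, on one side, the far field of the single-layer potential $\mathbf v$ with density $\mathbf R^*_\tau\bm{\psi}$ supported in $\overline{\tau}$ and, on the other, the far field of the exterior Dirichlet solution $\mathbf u$ in $\mathbb R^2\setminus\overline{D}$, and you then use the elastic Rellich lemma together with analyticity/unique continuation and the connectedness of $\mathbb R^2\setminus\overline{\tau\cup D}$ to conclude $\mathbf u=\sqrt{8\pi\omega}\,\mathbf v$ on that set. Where you genuinely diverge is the endgame. The paper glues: since $\mathbf v$ extends as a Navier solution across $D$ (its density sits on $\tau$, away from $\overline{D}$) and $\mathbf u$ extends across $\tau$ (it solves the Navier equation in all of $\mathbb R^2\setminus\overline{D}$), the two matching fields patch together to an entire radiating solution, which vanishes identically, so the far field is $0$ --- no jump relations are needed. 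You instead deduce from $\mathbf u=\sqrt{8\pi\omega}\,\mathbf v$ that the traction of $\mathbf v$ has no jump across $\tau$, invoke the weak jump relation for the elastic single-layer potential to get $\bm{\varphi}|_\tau=0$, and then eliminate the residual mass on the rim $\overline{\tau}\setminus\tau$ by a support argument. That route is valid and proves strictly more (the density itself vanishes, hence $\mathbf v\equiv\mathbf 0$ everywhere, not merely that the far field is zero), but it carries exactly the technical burden you flag: the traction jump relation must be justified for $[H^{-1/2}]^2$ densities, and one must know that no nonzero element of $[H^{-1/2}(\partial B)]^2$ is supported on the relative boundary of $\tau$ --- immediate when $\tau$ is a single arc with two endpoints, but requiring a zero-capacity argument for a general relatively open $\tau$. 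The paper's gluing trick sidesteps all of this, which is why it is the shorter argument here; your version would be the natural one if one also wanted injectivity-type information about the density $\bm{\psi}$.
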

	
	\begin{proof}
	Let $\mathbf h \in 	R\left( \mathbf H^*_{\tau} \right) \cap R\left( \mathbf G\right)$, since $\mathbf h$ lies in the ranges of $\mathbf H^*_{\tau}$ and $\mathbf G$, we can deduce that there exists $\mathbf{\phi} _{\tau} \in \left[H^{-1/2} \left(\tau \right) \right] ^2$, $\mathbf f \in \left[H^{1/2}\left(\partial D\right) \right] ^2$, such that
	$$
	\mathbf h= \mathbf H^*_{\tau} \phi _{\tau}= \mathbf G \mathbf f.
	$$
	Since single-layer potentials with density $\phi _{\tau}$ is the solution to the Navier equation in $\mathbb R^2 \backslash \overline{\tau}$ and satisfy the Kupradze radiation condition, it follows that $\sqrt{8\pi\omega} \mathbf S_B \mathbf R^*_{\tau} \phi _{\tau}$ is also a solution to the  the Navier equation in $\mathbb R^2 \backslash \overline{\tau}$ and satisfy the Kupradze radiation condition, let $\mathbf v _1:= \sqrt{8\pi\omega} \mathbf S_B \mathbf R^*_{\tau} \phi _{\tau} \in \left[ H^1_{loc} \left( \mathbb R^2 \backslash \overline{\tau} \right) \right] ^2$, then $\mathbf v _1$ is the radiating solution to the following equation
	\begin{equation}
			\triangle ^* \mathbf v _1 + \omega ^2 \mathbf v _1 =0,\qquad \mbox{in}\quad \mathbb R^2 \backslash \overline{\tau}.
	\end{equation}
	From \eqref{eq:HGSoperator}, we can infer that $\mathbf H^*_{\tau} \phi _{\tau}$ is $\sqrt{8\pi\omega}$ times the far-field pattern of the single-layer potential $\mathbf S_B$ with density $\mathbf R^*_{\tau} \phi _{\tau}$. Then, we can derive that $\mathbf H^*_{\tau} \phi _{\tau}= \mathbf v^{\infty} _1$. 
	
	On the other hand, if we let $\mathbf v _2 \in \left[ H^1_{loc} \left( \mathbb R^2 \backslash \overline{D} \right) \right] ^2$ be the radiating solution to the Navier equation satisfying $ \mathbf v _2= \mathbf f$ on $\partial D $, then according to the definition of the data-to-pattern operator, we can obtain $ \mathbf G \mathbf f= \mathbf v^{\infty} _2$. Clearly, $\mathbf v _2 $ is the radiating solution to the following equation
   \begin{equation}
	\triangle ^* \mathbf v _2 + \omega ^2 \mathbf v _2 =0,\qquad \mbox{in}\quad \mathbb R^2 \backslash \overline{D}.
   \end{equation}
	Since $\mathbf v^{\infty} _1=\mathbf H^*_{\tau} \phi _{\tau}= \mathbf h= \mathbf G \mathbf f= \mathbf v^{\infty} _2$, according to the Rellich lemma for elastic waves, we can derive that
	$$
	\mathbf v _1=\mathbf v _2, \qquad \mbox{in}\quad \mathbb R^2 \backslash \overline{\tau \cup D}.
	$$
	Then we can define $\mathbf v \in \left[ H^1_{loc} \left( \mathbb R^2 \right) \right] ^2$
	\begin{equation}
		\mathbf v:=
		\begin{cases}
			\mathbf v _1=\mathbf v _2,& \mbox{in}\quad \mathbb R^2 \backslash \overline{\tau \cup D},\\
			\mathbf v _1, &  \mbox{in}\quad D ,	\\
			\mathbf v _2, &  \mbox{on}\quad \tau .
		\end{cases}
	\end{equation}
	Then $\mathbf v$ is an entire solution to the Navier equation, so we have
	$$
	\mathbf h= \mathbf v^{\infty} _1= \mathbf v^{\infty} _2 =0.
	$$
	
	The proof is complete.	
	\end{proof}

To prove the existence of localized wave functions, we also need to introduce the following two lemmas, the proofs of which can be directly found in \cite{AG20}.
\begin{lem}\label{Inquallity}
	Assume $X, Y, Z$ be Hilbert spaces, let $A_1:X \to Y$ and $A_2: X \to Z$ are linear operators, then the following two statements are equivalent
	\begin{itemize}
		\item [(1).] There exists a constant $C>0$ such that $||A_1 x||_Y \leq C ||A_2 x||_Z, \quad \forall x \in X$.
		\item [(2).] $R \left( A^*_1 \right) \subseteq R \left( A^*_2 \right) $.
	\end{itemize}
\end{lem}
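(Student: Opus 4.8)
The plan is to recognise this statement as the Hilbert-space range-inclusion theorem (Douglas' lemma) and to prove the two implications separately. Throughout I would take $A_1,A_2$ to be bounded (as is the case for all operators appearing in our application), so that the adjoints $A_1^*:Y\to X$ and $A_2^*:Z\to X$ are bounded as well; I would write $N(A_2^*)$ for the null space of $A_2^*$ and use the orthogonal decomposition $\overline{R(A_2)}=N(A_2^*)^\perp$.

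For the implication $(1)\Rightarrow(2)$, I would fix $y\in Y$ and aim to exhibit some $z\in Z$ with $A_2^*z=A_1^*y$. The idea is to define a linear functional $\ell$ on the range $R(A_2)\subseteq Z$ by $\ell(A_2x):=\langle A_1x,y\rangle_Y$. The domination hypothesis $\|A_1x\|_Y\le C\|A_2x\|_Z$ does two things at once: it makes $\ell$ well defined (if $A_2x=A_2x'$ then $\|A_1(x-x')\|_Y\le C\|A_2(x-x')\|_Z=0$), and it makes $\ell$ bounded on $R(A_2)$ with $|\ell(A_2x)|\le C\|y\|_Y\,\|A_2x\|_Z$. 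Extending $\ell$ by continuity to $\overline{R(A_2)}$ and by zero to its orthogonal complement, the Riesz representation theorem supplies $z\in Z$ with $\ell(w)=\langle w,z\rangle_Z$ for every $w\in Z$. Comparing $\langle A_1x,y\rangle_Y=\ell(A_2x)=\langle A_2x,z\rangle_Z=\langle x,A_2^*z\rangle_X$ with $\langle A_1x,y\rangle_Y=\langle x,A_1^*y\rangle_X$ for all $x\in X$ forces $A_1^*y=A_2^*z\in R(A_2^*)$, which is the desired inclusion.

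For the converse $(2)\Rightarrow(1)$, the plan is to manufacture a bounded ``solution operator''. Assuming $R(A_1^*)\subseteq R(A_2^*)$, for each $y\in Y$ the element $A_1^*y$ lies in $R(A_2^*)$, so there is a unique $z\in N(A_2^*)^\perp$ with $A_2^*z=A_1^*y$; define $Ty:=z$. I would then verify that $T$ is a closed operator: if $y_n\to y$ and $Ty_n\to z$, then applying the bounded map $A_2^*$ gives $A_2^*z=\lim A_2^*(Ty_n)=\lim A_1^*y_n=A_1^*y$, and since each $Ty_n\in N(A_2^*)^\perp$ (a closed subspace) so does $z$, whence $Ty=z$. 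The closed graph theorem then yields a constant $C>0$ with $\|Ty\|_Z\le C\|y\|_Y$. Finally, for arbitrary $x\in X$ and $y\in Y$,
\[
|\langle A_1x,y\rangle_Y|=|\langle x,A_1^*y\rangle_X|=|\langle x,A_2^*Ty\rangle_X|=|\langle A_2x,Ty\rangle_Z|\le \|A_2x\|_Z\,\|Ty\|_Z\le C\|A_2x\|_Z\,\|y\|_Y,
\]
and taking the supremum over $\|y\|_Y\le1$ gives $\|A_1x\|_Y\le C\|A_2x\|_Z$.

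I expect the main obstacle to be the converse direction, and specifically the boundedness of the solution operator $T$: well-definedness hinges on selecting the minimal-norm preimage in $N(A_2^*)^\perp$, so that $T$ is genuinely single-valued, and the passage from ``$T$ closed'' to ``$T$ bounded'' is exactly where the completeness of the Hilbert spaces enters through the closed graph theorem. The forward direction, by contrast, is essentially a Hahn--Banach/Riesz extension argument once the domination inequality is used to guarantee simultaneously the well-definedness and the continuity of the functional $\ell$.
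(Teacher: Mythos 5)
Your proposal is correct. Note first that the paper does not prove this lemma at all: it simply defers to the reference \cite{AG20}, where the statement appears as a standard functional-analytic fact (a Douglas-type range-inclusion theorem). So there is no in-paper argument to compare against; what you have written is a complete, self-contained proof, and both directions are sound. The forward direction via the functional $\ell(A_2x):=\langle A_1x,y\rangle_Y$ correctly exploits the domination inequality twice (well-definedness and boundedness), and the converse correctly builds the minimal-norm solution operator $T$ and gets its boundedness from the closed graph theorem; your verification that the graph is closed, using continuity of $A_2^*$ and closedness of $N(A_2^*)^\perp$, is exactly the right argument. Two small points worth making explicit if this were written out in full: the lemma as stated says only ``linear operators,'' but your argument (and the intended meaning, as in \cite{AG20}) requires $A_1$ and $A_2$ to be bounded so that the adjoints exist as bounded operators --- you flag this, which is appropriate; and the linearity of $T$ should be recorded (it follows from the uniqueness of the representative in $N(A_2^*)^\perp$), since the closed graph theorem is applied to a linear map.
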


\begin{lem}\label{Dim}
	Assume $V, Z_1, Z_2 $ be subspaces of a vector space $Z$, if 
	$$
	Z_1 \cap Z_2= \left\lbrace 0 \right\rbrace \quad and \quad Z_1 \subseteq Z_2 +V,
	$$
	then $\rm {dim}\left(  Z_1 \right) \leqslant \rm {dim}\left(  V \right) $.
\end{lem}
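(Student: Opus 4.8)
The plan is to prove this purely algebraic statement by exhibiting an injective linear map from $Z_1$ into a space whose dimension is controlled by $\dim(V)$; the natural candidate is the composition of the inclusion $Z_1 \hookrightarrow Z_2 + V$ with the canonical projection onto the quotient $(Z_2+V)/Z_2$.

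First I would note that the hypothesis $Z_1 \subseteq Z_2 + V$ guarantees that $Z_1$ is a subspace of $Z_2 + V$, so the canonical quotient map $q : Z_2 + V \to (Z_2+V)/Z_2$ restricts to a well-defined linear map $q|_{Z_1} : Z_1 \to (Z_2+V)/Z_2$. Next I would compute its kernel: an element $z \in Z_1$ lies in $\ker(q|_{Z_1})$ precisely when $z \in Z_2$, i.e. when $z \in Z_1 \cap Z_2$. By hypothesis $Z_1 \cap Z_2 = \{0\}$, so $q|_{Z_1}$ is injective. Consequently $\dim(Z_1) = \dim\big(q(Z_1)\big) \le \dim\big((Z_2+V)/Z_2\big)$.

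It then remains to bound the dimension of the quotient $(Z_2+V)/Z_2$ by $\dim(V)$. Here I would invoke the second isomorphism theorem for vector spaces, which gives $(Z_2+V)/Z_2 \cong V/(V\cap Z_2)$; since a quotient of $V$ has dimension at most $\dim(V)$, we obtain $\dim\big((Z_2+V)/Z_2\big) = \dim(V) - \dim(V\cap Z_2) \le \dim(V)$. Combining the two inequalities yields $\dim(Z_1) \le \dim(V)$, as claimed.

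Since the statement is elementary linear algebra, there is no serious obstacle; the only point requiring care is to confirm that $q|_{Z_1}$ is genuinely injective, which is exactly where the disjointness hypothesis $Z_1 \cap Z_2 = \{0\}$ is used. If one prefers to avoid quotient-space machinery (and to keep the argument transparent in the case $\dim(V) < \infty$, which is the only situation needed in the application to Lemma \ref{DatatoPatternProperty}), an equivalent route is a direct basis chase: choose a basis $z_1,\dots,z_m$ of $Z_1$, write each $z_j = s_j + v_j$ with $s_j \in Z_2$ and $v_j \in V$ using $Z_1 \subseteq Z_2+V$, and verify that the $v_j$ are linearly independent modulo $Z_2$. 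Any nontrivial relation among the $v_j$ modulo $Z_2$ would produce a nonzero element of $Z_1 \cap Z_2$, contradicting the hypothesis; hence $m \le \dim(V)$, recovering the same conclusion.
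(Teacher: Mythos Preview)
Your proof is correct. The paper does not give its own argument for this lemma; it simply states that the proof ``can be directly found in \cite{AG20}'', so there is nothing to compare against. Your quotient-map argument (with the second isomorphism theorem) and the alternative basis chase are both standard and complete. One small remark: in your parenthetical aside you refer to Lemma~\ref{DatatoPatternProperty} as the place where this result is applied, but in the paper Lemma~\ref{Dim} is actually invoked in the proof of Theorem~\ref{LocialWaveFounc}.
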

Following the above introduction, we will demonstrate the existence of localized wave functions through the following theorem. 
\begin{thm}\label{LocialWaveFounc}
 Let $B, D  \subseteq \mathbb R^2$ is open and Lipschitz bounded such that $\mathbb R^2 \backslash \overline {D}$ is connected. Assume $B \nsubseteq D$, then for any finite-dimensional subspace $V \subseteq \left[L^{2}\left(\mathbb S\right) \right] ^2 $ , there exists a sequence $\left\lbrace  \mathbf h_n \right\rbrace  \subseteq  V^{\perp} $ such that
 $$
 \left\|\mathbf H_B \mathbf h_n\right\|_{\left[H^{1/2}\left(\partial B\right) \right] ^2 } \to \infty \quad and \quad \left\| \mathbf G^* \mathbf h_n\right\|_{\left[H^{-1/2}\left(\partial D\right) \right] ^2 } \to 0, \qquad n\to \infty.
 $$
\end{thm}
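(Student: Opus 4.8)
The plan is to argue by contradiction, turning the geometric separation already recorded in Theorem \ref{RangeHG} into a statement about divergent sequences via the two abstract lemmas. Fix a finite-dimensional subspace $V \subseteq [L^2(\mathbb{S})]^2$, let $P$ be the orthogonal projection of $[L^2(\mathbb{S})]^2$ onto $V^\perp$ and $\iota: V^\perp \hookrightarrow [L^2(\mathbb{S})]^2$ the inclusion, so that $\iota^* = P$. I would first observe that the requested sequence exists precisely when no constant $C>0$ satisfies $\|\mathbf{H}_B \mathbf{h}\|_{[H^{1/2}(\partial B)]^2} \le C\|\mathbf{G}^*\mathbf{h}\|_{[H^{-1/2}(\partial D)]^2}$ for all $\mathbf{h}\in V^\perp$: if the inequality fails for every $C$, one extracts $\mathbf{h}_n \in V^\perp$ with $\|\mathbf{H}_B\mathbf{h}_n\| > n\|\mathbf{G}^*\mathbf{h}_n\|$ and rescales (normalising $\|\mathbf{H}_B\mathbf{h}_n\|=1$, then multiplying by $\|\mathbf{G}^*\mathbf{h}_n\|^{-1/2}$) to force $\|\mathbf{H}_B\mathbf{h}_n\|\to\infty$ while $\|\mathbf{G}^*\mathbf{h}_n\|\to 0$; conversely such a sequence clearly precludes any uniform $C$. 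Applying Lemma \ref{Inquallity} to the restricted operators $\mathbf{H}_B\iota$ and $\mathbf{G}^*\iota$ on $V^\perp$, the negation of the theorem is therefore equivalent to the range inclusion $R((\mathbf{H}_B\iota)^*) \subseteq R((\mathbf{G}^*\iota)^*)$.

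Next I would compute these adjoints explicitly. Since $(\mathbf{H}_B\iota)^* = \iota^*\mathbf{H}_B^* = P\mathbf{H}_B^*$ and $(\mathbf{G}^*\iota)^* = \iota^*\mathbf{G} = P\mathbf{G}$, the contradiction hypothesis reads $PR(\mathbf{H}_B^*) \subseteq PR(\mathbf{G})$. The key algebraic step is to remove the projection at the cost of the finite-dimensional space $V$: writing each $x \in R(\mathbf{H}_B^*)$ as $x = Px + (I-P)x$ with $(I-P)x \in V$ gives $R(\mathbf{H}_B^*) \subseteq PR(\mathbf{H}_B^*) + V$, and likewise $PR(\mathbf{G}) \subseteq R(\mathbf{G}) + V$. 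Combining these with the hypothesis yields $R(\mathbf{H}_B^*) \subseteq R(\mathbf{G}) + V$, and since $R(\mathbf{H}_\tau^*) = R(\mathbf{H}_B^*\mathbf{R}_\tau^*) \subseteq R(\mathbf{H}_B^*)$ we obtain $R(\mathbf{H}_\tau^*) \subseteq R(\mathbf{G}) + V$.

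I would then invoke Lemma \ref{Dim} with $Z_1 = R(\mathbf{H}_\tau^*)$, $Z_2 = R(\mathbf{G})$ and the given $V$. Its first hypothesis $Z_1 \cap Z_2 = \{0\}$ is exactly Theorem \ref{RangeHG} (here $\tau \subseteq \partial B \setminus \overline{D}$ is chosen using $B \nsubseteq D$), and its second hypothesis $Z_1 \subseteq Z_2 + V$ was just established. The lemma delivers $\dim R(\mathbf{H}_\tau^*) \le \dim V < \infty$. To reach a contradiction it remains to show $R(\mathbf{H}_\tau^*)$ is infinite-dimensional. For this I would use the factorisation \eqref{eq:RestriH}, namely $\mathbf{H}_\tau^* = \sqrt{8\pi\omega}\,\mathbf{G}_B\mathbf{S}_B\mathbf{R}_\tau^*$: the extension-by-zero $\mathbf{R}_\tau^*$ is injective, $\mathbf{S}_B$ is an isomorphism by Lemma \ref{SProperty}(1), and the data-to-pattern operator $\mathbf{G}_B$ is injective, so the composition is injective on the infinite-dimensional space $[H^{-1/2}_{supp}(\tau)]^2$ and hence has infinite-dimensional range, contradicting the bound above.

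I expect the main obstacle to be the middle mechanism: faithfully translating \emph{``no divergent sequence inside the prescribed $V^\perp$''} into a clean range inclusion and then absorbing the orthogonal projection $P$ into the finite-dimensional correction $V$, so that the separation of Theorem \ref{RangeHG} can be fed into the dimension count of Lemma \ref{Dim}. The remaining care is to confirm $\dim R(\mathbf{H}_\tau^*) = \infty$, which hinges on the injectivity of all three factors in \eqref{eq:RestriH}; this is where the standing assumption that $\omega^2$ is not a Dirichlet eigenvalue of $-\triangle^*$ on $B$ (ensuring $\mathbf{S}_B$ is an isomorphism) and the injectivity of $\mathbf{G}_B$ enter.
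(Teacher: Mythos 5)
Your argument is correct, and it rests on exactly the same three pillars as the paper's proof -- Theorem~\ref{RangeHG} for $R(\mathbf H_\tau^*)\cap R(\mathbf G)=\{0\}$, Lemma~\ref{Inquallity} to pass between norm inequalities and range inclusions, Lemma~\ref{Dim} for the dimension count, plus the infinite-dimensionality of $R(\mathbf H_\tau^*)$ via the factorization \eqref{eq:RestriH} -- but you run the logic in the opposite direction, and the mechanics differ in a way worth noting. The paper argues forward: it first shows $R(\mathbf H_\tau^*)\nsubseteq R(\mathbf G)+V=R\left(\left[\mathbf G\ \ \mathbf P\right]\right)$, applies Lemma~\ref{Inquallity} to the pair $\bigl(\mathbf H_\tau,\ [\mathbf G^*;\mathbf P]\bigr)$ acting on all of $[L^2(\mathbb S)]^2$ (with $\mathbf P$ the projection onto $V$ stacked alongside $\mathbf G^*$), extracts and rescales $\tilde{\mathbf g}_n$, and only then produces elements of $V^\perp$ by setting $\mathbf h_n:=\tilde{\mathbf g}_n-\mathbf P\tilde{\mathbf g}_n$, which costs a round of triangle-inequality estimates \eqref{eq:HInequality}--\eqref{eq:GInequality} and a final passage from $\mathbf H_\tau$ back to $\mathbf H_B$ via $\|\mathbf R_\tau\|$. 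You instead argue by contradiction, restrict $\mathbf H_B$ and $\mathbf G^*$ to $V^\perp$ from the outset, compute the adjoints through $\iota^*=P$, and absorb the projection algebraically into the finite-dimensional correction ($R(\mathbf H_B^*)\subseteq P R(\mathbf H_B^*)+V$, etc.), finally feeding $R(\mathbf H_\tau^*)\subseteq R(\mathbf H_B^*)$ into Lemma~\ref{Dim}; the dual fact $\mathbf H_\tau^*=\mathbf H_B^*\mathbf R_\tau^*$ replaces the paper's closing norm estimate. Your version is shorter and avoids the explicit sequence bookkeeping; the paper's version is constructive in that it exhibits the localized densities $\mathbf h_n$ directly, which is the form actually used downstream in Theorem~\ref{CharacterofObstacle}. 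Two small points of care in your write-up: the rescaling step needs the separate (trivial) treatment of the case $\mathbf G^*\mathbf h_n=0$, which you acknowledge implicitly, and the identification $(\mathbf G^*)^*=\mathbf G$ uses the same $H^{\pm1/2}$ duality convention the paper adopts, so neither is a real gap.
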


\begin{proof}
Let $B, D  \subseteq \mathbb R^2$ is open and Lipschitz bounded such that $\mathbb R^2 \backslash \overline {D}$ is connected. Assume $B \nsubseteq D$, let $V \subseteq \left[L^{2}\left(\mathbb S\right) \right] ^2 $ is a finite-dimensional subspace, therefore, the orthogonal projection onto V is well-defined, and we denote it as $\mathbf P :\left[L^{2}\left(\mathbb S\right) \right] ^2 \to \left[L^{2}\left(\mathbb S\right) \right] ^2$. 

Since $B \nsubseteq D $ and $\mathbb R^2 \backslash \overline {D}$ is connected, there exists a relatively open $\tau \subseteq \partial B\backslash \overline{D}$ such that $\mathbb R^2 \backslash \overline {\left( \tau \cup D\right) }$ is connected, from Theorem \ref{RangeHG} we have
\begin{align}\label{eq:RangeHtG}
R\left( \mathbf H^*_{\tau} \right) \cap R\left( \mathbf G\right) =\left\lbrace 0\right\rbrace .
\end{align}

Now, we turn our attention to equation \eqref{eq:RestriH}. Without loss of generality, we assume that $\omega ^2$ is not an eigenvalue of $ \triangle ^*$ in $B$, therefore, both $\mathbf S _B$ (single-layer potential operator on $\partial B$ ) and $\mathbf G _B$ are injective. Moreover, the range of the extension operator $\mathbf R^*_{\tau} $ is infinite-dimensional, which implies that $ R \left( \mathbf H^*_{\tau} \right) = R \left( \sqrt{8\pi\omega}\mathbf{G_B S_B}  \mathbf R^*_{\tau}   \right) $ is infinite-dimensional. Therefore, according to lemma \ref{Dim} and \eqref{eq:RangeHtG} , we can obtain
$$
R \left( \mathbf H^*_{\tau} \right) \nsubseteq R\left( \mathbf G\right) +V=  R \left( \left[ \mathbf G \quad \mathbf P \right] \right).
$$
Hence, utilizing the Lemma \ref{Inquallity}, we can deduce that there does not exist a constant $C > 0$ such that
\begin{equation}\begin{aligned}\label{eq:inver}
|| \mathbf H_{\tau} \mathbf g ||^2_{\left[H^{1/2}\left(\tau \right) \right] ^2 }
&\leq C^2 \left\| 
\begin{bmatrix}
	\mathbf G^* \\ \mathbf P
\end{bmatrix}
\mathbf g \right\| ^2_{\left[H^{-1/2}\left(\partial D \right) \right] ^2  \times \left[L^{2}\left(\mathbb S \right) \right] ^2}\\
&=  C^2 \left( \left\| \mathbf G^* \mathbf g \right\|^2_{\left[H^{-1/2}\left(\partial D \right) \right] ^2}+\left\| \mathbf P \mathbf g \right\|^2_{\left[L^{2}\left(\mathbb S \right) \right] ^2} \right) .
\end{aligned}\end{equation}
Since $\mathbf P$ is an orthogonal projection operator, it follows that $\mathbf P$ is a self-adjoint operator, that is, $\mathbf P = \mathbf P ^*$. Therefore, from \eqref{eq:inver} for any $n \in \mathbb N$, there exists a $\mathbf g_n \in \left[L^{2}\left(\mathbb S \right) \right] ^2  $ such that
$$
\left\| \mathbf H_{\tau} \mathbf g_n \right\|^2_{\left[H^{1/2}\left(\tau \right) \right] ^2 } > n^2 \left( \left\| \mathbf G^* \mathbf g_n \right\|^2_{\left[H^{-1/2}\left(\partial D \right) \right] ^2}+\left\| \mathbf P \mathbf g_n \right\|^2_{\left[L^{2}\left(\mathbb S \right) \right] ^2} \right).
$$
For any $n \in \mathbb N$, we denote $M_n:=\left\| \mathbf G^* \mathbf g_n \right\|^2_{\left[H^{-1/2}\left(\partial D \right) \right] ^2}+\left\| \mathbf P \mathbf g_n \right\|^2_{\left[L^{2}\left(\mathbb S \right) \right] ^2}$,  $\tilde{\mathbf g_n}:=\mathbf g_n /\left(\sqrt{n M_n} \right) $. Then on one hand, we have
\begin{equation}\begin{aligned}
\left\| \mathbf H_{\tau} \tilde{\mathbf g_n} \right\|^2_{\left[H^{1/2}\left(\tau \right) \right] ^2 }
&= \frac{1}{n M_n} \left\| \mathbf H_{\tau} \mathbf g_n \right\|^2_{\left[H^{1/2}\left(\tau \right) \right] ^2 } \\
&>\frac{n}{M_n} \left( \left\|  \mathbf G^* \mathbf g_n \right\|^2_{\left[H^{-1/2}\left(\partial D \right) \right] ^2}+\left\|  \mathbf P \mathbf g_n \right\|^2_{\left[L^{2}\left(\mathbb S \right) \right] ^2} \right) \\
&= n.
\end{aligned}\end{equation}
i.e. 
$$
\left\| \mathbf H_{\tau} \tilde{\mathbf g_n} \right\|^2_{\left[H^{1/2}\left(\tau \right) \right] ^2 } > n \to \infty,\quad   n \to \infty .
$$
On the other hand, 
\begin{equation}\begin{aligned}
		\left\| \mathbf G^* \tilde{\mathbf g_n} \right\|^2_{\left[H^{-1/2}\left(\partial D \right) \right] ^2}+\left\| \mathbf P \tilde{\mathbf g_n} \right\|^2_{\left[L^{2}\left(\mathbb S \right) \right] ^2}
		&= \frac{1}{n M_n} \left(\left\| \mathbf G^* \mathbf g_n \right\|^2_{\left[H^{-1/2}\left(\partial D \right) \right] ^2}+\left\| \mathbf P \mathbf g_n \right\|^2_{\left[L^{2}\left(\mathbb S \right) \right] ^2} \right) \\
		&= \frac{1}{n}.
\end{aligned}\end{equation}
Thus, we can obtain
$$
\left\| \mathbf G^* \tilde{\mathbf g_n} \right\|^2_{\left[H^{-1/2}\left(\partial D \right) \right] ^2}+\left\| \mathbf P \tilde{\mathbf g_n} \right\|^2_{\left[L^{2}\left(\mathbb S \right) \right] ^2}= \frac{1}{n} \to 0, \quad n \to \infty .
$$
In summary, when $n \to \infty $, we can obtain
\begin{equation}\begin{aligned}\label{eq:HGPasymptotic}
  \left\| \mathbf H_{\tau} \tilde{\mathbf g_n} \right\|^2_{\left[H^{1/2}\left(\tau \right) \right] ^2 } \to \infty ,\quad \left\| \mathbf G^* \tilde{\mathbf g_n} \right\|^2_{\left[H^{-1/2}\left(\partial D \right) \right] ^2} \to 0,\quad \left\| \mathbf P \tilde{\mathbf g_n} \right\|^2_{\left[L^{2}\left(\mathbb S \right) \right] ^2} \to 0.
 \end{aligned}\end{equation}

For any $n\in \mathbb N$, we define $\mathbf h_n:= \tilde{\mathbf g_n} - \mathbf P \tilde{\mathbf g_n}$, using the triangle inequality, we can obtain
\begin{equation}\begin{aligned}\label{eq:HInequality}
	    \left\| \mathbf H_{\tau} \mathbf h_n \right\|_{\left[H^{1/2}\left(\tau \right) \right]^2 }
		&\geq \left| \left\| \mathbf H_{\tau} \tilde{\mathbf g_n} \right\|_{\left[H^{1/2}\left(\tau \right) \right]^2 } - \left\| \mathbf H_{\tau} \mathbf P \tilde{\mathbf g_n} \right\|_{\left[H^{1/2}\left(\tau \right) \right]^2 } \right |\\
		&\geq \left\| \mathbf H_{\tau} \tilde{\mathbf g_n} \right\|_{\left[H^{1/2}\left(\tau \right) \right]^2 } - \left\| \mathbf H_{\tau} \mathbf P \tilde{\mathbf g_n} \right\|_{\left[H^{1/2}\left(\tau \right) \right]^2 }\\
		&\geq \left\| \mathbf H_{\tau} \tilde{\mathbf g_n} \right\|_{\left[H^{1/2}\left(\tau \right) \right]^2 } - \left\| \mathbf H_{\tau} \right\|_{ \left[L^{2}\left(\mathbb S \right) \right]^2 \to \left[H^{1/2} \left(\tau \right) \right] ^2 } \left\|\mathbf P \tilde{\mathbf g_n} \right\|_{\left[L^{2} \left(\mathbb S \right) \right] ^2}.
\end{aligned}\end{equation}
and
\begin{equation}\begin{aligned}\label{eq:GInequality}
		\left\| \mathbf G^* \mathbf h_n \right\|_{\left[H^{-1/2}\left(\partial D \right) \right]^2 }
		&\leq \left\| \mathbf G^* \tilde{\mathbf g_n }\right\|_{\left[H^{-1/2}\left(\partial D \right) \right]^2 }+ \left\| \mathbf G^* \mathbf P \tilde{\mathbf g_n }\right\|_{\left[H^{-1/2}\left(\partial D \right) \right]^2 } \\
		&\leq \left\| \mathbf G^* \tilde{\mathbf g_n }\right\|_{\left[H^{-1/2}\left(\partial D \right) \right]^2 }+ \left\| \mathbf G^*\right\|_{\left[L^{2} \left(\mathbb S \right) \right] ^2 \to \left[H^{-1/2}\left(\partial D \right) \right]^2} \left\|\mathbf P \tilde{\mathbf g_n }\right\|_{\left[L^{2} \left(\mathbb S \right) \right] ^2}.
\end{aligned}\end{equation}
From \eqref{eq:HGPasymptotic} , \eqref{eq:HInequality} and  \eqref{eq:GInequality} we have $ \left\| \mathbf H_{\tau} \mathbf h_n \right\|_{\left[H^{1/2}\left(\tau \right) \right]^2 } \to \infty $, $\left\| \mathbf G^* \mathbf h_n \right\|_{\left[H^{-1/2}\left(\partial D \right) \right]^2 } \to 0$, when $n \to \infty$. From the definition of the restriction operator $\mathbf R_{\tau}$, we have that $\mathbf H_{\tau}= \mathbf R_{\tau} \mathbf H_{B} $ holds, so we can obtain
\begin{equation}\begin{aligned}
\left\| \mathbf H_{\tau} \mathbf h_n \right\|_{\left[H^{1/2}\left(\tau \right) \right]^2 } 
&=\left\|\mathbf R_{\tau} \mathbf H_{B} \mathbf h_n \right\|_{\left[H^{1/2}\left(\partial B \right) \right]^2 } \\
& \leq \left\|\mathbf R_{\tau}\right\|_{\left[H^{1/2}\left(\partial B \right) \right]^2 \to \left[H^{1/2}\left(\tau \right) \right]^2 } \left\| \mathbf H_{B} \mathbf h_n \right\|_{\left[H^{1/2}\left(\partial B \right) \right]^2 } ,
\end{aligned}\end{equation}
i.e. we have $ \left\| \mathbf H_{B} \mathbf h_n \right\|_{\left[H^{1/2}\left(\partial B \right) \right]^2 } \geq \left\| \mathbf H_{\tau} \mathbf h_n \right\|_{\left[H^{1/2}\left(\tau \right) \right]^2 }  / \left\|\mathbf R_{\tau}\right\|_{\left[H^{1/2}\left(\partial B \right) \right]^2 \to \left[H^{1/2}\left(\tau \right) \right]^2 }$, since when $n \to \infty $,  it follows that $\left\| \mathbf H_{\tau} \mathbf h_n \right\|_{\left[H^{1/2}\left(\tau \right) \right]^2 } \to \infty $ and $\left\|\mathbf R_{\tau}\right\|_{\left[H^{1/2}\left(\partial B \right) \right]^2 \to \left[H^{1/2}\left(\tau \right) \right]^2 }$ is a constant, we have
$$
\left\| \mathbf H_{B} \mathbf h_n \right\|_{\left[H^{1/2}\left(\partial B \right) \right]^2 } \to \infty ,\quad n\to\infty.
$$

Therefore, with the above expression and \eqref{eq:GInequality} , the proof is completed.
\end{proof}

\section{Characterization of the obstacle}\label{sec:Characterization of the obstacle}

In this section, we present the principal results of this work, namely the theorem on the characterization of the shape of the obstacle boundary, before formally introducing this theorem, we first need to introduce several notations and operators.

First, we define a boundary value mapping operator and demonstrate its compactness. Let $\overline{B}\subseteq D$, we define boundary value mapping operator $\mathbf M_{B \to D}$:$\left[H^{1/2}\left(\partial B \right) \right]^2 \to \left[H^{1/2}\left(\partial D \right) \right]^2$ as
\begin{equation}\label{eq:BVO}
		\mathbf M_{B \to D} \mathbf f := \mathbf u |_{\partial D},
\end{equation}
where $\mathbf u \in \left[H^{1}_{loc}\left(\mathbb R \backslash \overline{B} \right) \right]^2 $ is the unique solution to the following exterior Dirichlet boundary value problem
\begin{equation}\label{eq:ueq}
	\begin{cases}
		\triangle ^* \mathbf u + \omega ^2 \mathbf u =0, &\qquad \mbox{in}\quad \mathbb R^2 \backslash \overline{B},\\
		\mathbf u= \mathbf f , & \qquad\mbox{on} \quad \partial B ,\\
		\lim\limits_{\rho \to \infty} \rho^{\frac{1}{2}} \left( \frac{\partial \mathbf u_{\beta}}{\partial \rho} - ik_{\beta}\mathbf u_{\beta}\right) =0, &\quad \rho =|\mathbf x|, \beta = p,s.
	\end{cases}
\end{equation}

Furthermore, from the definition of the data-to-pattern operator, we can obtain
$$
	\mathbf G \mathbf M_{B \to D} \mathbf f = \mathbf w^{\infty}, 
$$
where $\mathbf w \in \left[H^{1}_{loc}\left(\mathbb R \backslash \overline{D} \right) \right]^2$ is the unique solution to the following exterior Dirichlet boundary value problem
\begin{equation}
	\begin{cases}
		\triangle ^* \mathbf w + \omega ^2 \mathbf w =0, &\qquad \mbox{in}\quad \mathbb R^2 \backslash \overline{D},\\
		\mathbf w= \mathbf M_{B \to D}\mathbf f , & \qquad\mbox{on} \quad \partial D ,\\
		\lim\limits_{\rho \to \infty} \rho^{\frac{1}{2}} \left( \frac{\partial \mathbf w_{\beta}}{\partial \rho} - ik_{\beta}\mathbf w_{\beta}\right) =0, &\quad \rho =|\mathbf x|, \beta = p,s.
	\end{cases}
\end{equation}
Since $\overline{B} \subseteq D$, it follows that $\mathbf u^{\infty} = \mathbf w^{\infty}$ and therefore we have $\mathbf G \mathbf M_{B \to D} \mathbf f = \mathbf w^{\infty} = \mathbf u^{\infty} =\mathbf G_B \mathbf f$, so we can obtain $\mathbf G_B=\mathbf G \mathbf M_{B \to D}$.

Next, we aim to demonstrate that $\mathbf M_{B \to D}$ is a compact operator. To prove this claim, we need the following lemma, and the proof of this lemma can be directly obtained from the Theorem 8.8 in \cite{GT01}.
\begin{lem}\label{ElliptRegular}
Let $\mathcal L u:= \partial _i \left( a^{ij} \left( x \right) \partial_j u +b^i \left( x \right) u\right) +c^i \left( x \right) \partial_i u+ d\left( x \right) u$ , $i,j=1,2$, and $\mathcal L$ be a coercive elliptic operator in the domain $\Omega$, and $u\in \mathbf H^1 \left( \Omega\right)$ be a weak solution to $\mathcal L u=f $. The coefficients $a^{ij}$ and $b^{i}$ of the differential operator are uniformly Lipschitz continuous in $\Omega$, the coefficients $c^{i}$, $d$ are essentially bounded in $\Omega$. Then for any subdomain $\Omega^{'} \subseteq \Omega$, we have $u\in \mathbf H^2 \left( \Omega^{'}\right) $.
\end{lem}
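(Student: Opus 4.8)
The plan is to establish interior $\mathbf{H}^2$ regularity by Nirenberg's difference-quotient method, which is precisely the argument behind Theorem 8.8 of \cite{GT01}. For a coordinate direction $e_k$ and a small increment $h\neq 0$, introduce the difference quotient $\Delta_h^k v(x):=\bigl(v(x+he_k)-v(x)\bigr)/h$. Two standard facts drive the proof: if $v\in\mathbf{H}^1(\Omega)$ then $\|\Delta_h^k v\|_{L^2(\Omega')}\le\|\partial_k v\|_{L^2(\Omega)}$ whenever $|h|<\operatorname{dist}(\Omega',\partial\Omega)$; and conversely, if $v\in L^2$ obeys $\|\Delta_h^k v\|_{L^2(\Omega')}\le C$ uniformly for small $h$, then the weak derivative $\partial_k v$ exists on $\Omega'$ with $\|\partial_k v\|_{L^2(\Omega')}\le C$. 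The task thus reduces to producing a uniform-in-$h$ bound for the quantities $\Delta_h^k\partial_j u$.

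First I would record the weak formulation of $\mathcal{L}u=f$: for every $\varphi\in\mathbf{H}^1_0(\Omega)$,
\[
\int_\Omega \bigl(a^{ij}\partial_j u + b^i u\bigr)\partial_i\varphi\,dx = \int_\Omega \bigl(c^i\partial_i u + du\bigr)\varphi\,dx - \int_\Omega f\varphi\,dx.
\]
Fix $\Omega'\Subset\Omega$ and a cutoff $\eta\in C_c^\infty(\Omega)$ with $\eta\equiv1$ on $\Omega'$, and insert the admissible test function $\varphi=-\Delta_{-h}^k(\eta^2\,\Delta_h^k u)$ for $|h|$ small. Transferring the outer difference quotient via the summation-by-parts identity $\int(\Delta_{-h}^k w)\psi = -\int w\,\Delta_h^k\psi$ and expanding with the discrete product rule $\Delta_h^k(ab)(x)=a(x+he_k)\Delta_h^k b(x)+(\Delta_h^k a)(x)b(x)$, the left-hand side produces the principal quantity $\int_\Omega\eta^2\,a^{ij}(\cdot+he_k)\,\partial_i(\Delta_h^k u)\,\partial_j(\Delta_h^k u)\,dx$ plus remainder terms.

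The coercive ellipticity of $(a^{ij})$ bounds the principal quantity below by $\lambda\|\eta\,\nabla\Delta_h^k u\|_{L^2}^2$ for some $\lambda>0$. The remainder splits into terms where the difference quotient lands on a coefficient and terms carrying a derivative of $\eta$ or a lower-order coefficient. Here the Lipschitz hypothesis on the leading coefficients is indispensable: it makes $\|\Delta_h^k a^{ij}\|_{L^\infty}$ and $\|\Delta_h^k b^i\|_{L^\infty}$ bounded uniformly in $h$, so those terms are controlled by $\|\nabla u\|_{L^2(\Omega)}$ and $\|u\|_{L^2(\Omega)}$; the essentially bounded $c^i,d$ and the datum $f$ contribute terms controlled by $\|u\|_{\mathbf{H}^1(\Omega)}$ and $\|f\|_{L^2(\Omega)}$ together with $\|\Delta_h^k u\|_{L^2}\le\|\nabla u\|_{L^2}$. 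Applying Cauchy--Schwarz and Young's inequality to absorb every factor of $\nabla\Delta_h^k u$ into the coercive term yields $\|\eta\,\nabla\Delta_h^k u\|_{L^2}^2\le C\bigl(\|u\|_{\mathbf{H}^1(\Omega)}^2+\|f\|_{L^2(\Omega)}^2\bigr)$ with $C$ independent of $h$.

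Since $\eta\equiv1$ on $\Omega'$, this is the desired uniform bound on $\|\Delta_h^k\partial_j u\|_{L^2(\Omega')}$ for every pair $(j,k)$, and the converse difference-quotient characterization then gives $\partial_k\partial_j u\in L^2(\Omega')$, i.e. $u\in\mathbf{H}^2(\Omega')$. I expect the main obstacle to be the bookkeeping in the remainder estimate — confirming that the coefficient-difference and lower-order terms are genuinely \emph{absorbed} into the coercive principal term rather than merely bounded. This is exactly the place where the uniform-in-$h$ control supplied by the Lipschitz regularity of $a^{ij}$ and $b^i$ cannot be weakened to mere boundedness.
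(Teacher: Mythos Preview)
Your proposal is correct: the paper does not give its own proof of this lemma but simply cites Theorem~8.8 of \cite{GT01}, and the Nirenberg difference-quotient argument you outline is exactly the proof given there. Your sketch faithfully reproduces that argument, so there is nothing to compare or correct.
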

Using the preceding lemma, we now demonstrate through the following lemma that G is a compact operator.

\begin{lem}\label{BoundaryVCompac}
	Let $\overline{B}\subseteq D$, the boundary value mapping operator $ \mathbf M_{B \to D}$ is a compact operator from $ \left[H^{1/2}\left(\partial B \right) \right]^2 $ to $\left[H^{1/2}\left(\partial D \right) \right]^2$.
\end{lem}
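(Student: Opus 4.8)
The plan is to prove compactness of $\mathbf{M}_{B\to D}$ by factoring it through a smoother intermediate space and invoking the Rellich–Kondrachov compact embedding of Sobolev spaces. The key observation is that $\mathbf{M}_{B\to D}\mathbf{f}$ is the trace on $\partial D$ of a function $\mathbf{u}$ that solves the Navier equation in $\mathbb{R}^2\setminus\overline{B}$, and since $\overline{B}\subseteq D$, the restriction of $\mathbf{u}$ to any fixed neighborhood of $\partial D$ lies \emph{strictly inside} the region where $\mathbf{u}$ is a solution of the homogeneous equation. Interior elliptic regularity then buys us one extra derivative there, and this gain is exactly what produces compactness.

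Concretely, I would first fix a bounded smooth open set $\Omega'$ with $\overline{B}\subseteq\Omega'$ and $\partial D\subseteq\Omega'$, together with a slightly larger open set $\Omega$ with $\overline{\Omega'}\subseteq\Omega$ and $\overline{B}\cap\overline{\Omega\setminus\Omega'}=\emptyset$, so that $\mathbf{u}$ solves the Navier equation throughout $\Omega$ (away from $\overline B$). By the well-posedness of the exterior Dirichlet problem \eqref{eq:ueq} cited from \cite{HH93}, the solution map $\mathbf{f}\mapsto\mathbf{u}$ is bounded from $\left[H^{1/2}(\partial B)\right]^2$ into $\left[H^1(\Omega')\right]^2$. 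Next I would apply Lemma \ref{ElliptRegular} to the Navier system $\triangle^*\mathbf{u}+\omega^2\mathbf{u}=\mathbf 0$, whose principal part is uniformly strongly elliptic under the strong ellipticity conditions $\mu>0$, $\lambda+\mu>0$ and whose coefficients are constant (hence trivially Lipschitz and bounded). This yields $\mathbf{u}\in\left[H^2(\Omega')\right]^2$ on the subdomain $\Omega'\Subset\Omega$, with a norm bound
\begin{equation}
\left\|\mathbf{u}\right\|_{\left[H^2(\Omega')\right]^2}\leq C\left\|\mathbf{f}\right\|_{\left[H^{1/2}(\partial B)\right]^2}.
\end{equation}

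Then I would factor $\mathbf{M}_{B\to D}$ as the composition
\begin{equation}
\left[H^{1/2}(\partial B)\right]^2\xrightarrow{\ \mathbf{f}\mapsto\mathbf{u}|_{\Omega'}\ }\left[H^2(\Omega')\right]^2\hookrightarrow\left[H^1(\Omega')\right]^2\xrightarrow{\ \mathrm{tr}\ }\left[H^{1/2}(\partial D)\right]^2,
\end{equation}
where the first arrow is bounded by the regularity estimate above, the middle arrow is the compact Rellich embedding $\left[H^2(\Omega')\right]^2\hookrightarrow\left[H^1(\Omega')\right]^2$ (valid on the bounded Lipschitz domain $\Omega'$), and the last arrow is the bounded trace operator onto $\partial D\subseteq\Omega'$. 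Since the composition of a bounded operator with a compact operator is compact, $\mathbf{M}_{B\to D}$ is compact, which is the claim.

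\textbf{The main obstacle} I anticipate is justifying the interior regularity gain cleanly in the vector-valued elliptic-system setting: Lemma \ref{ElliptRegular} as stated is a scalar interior $H^2$ estimate, so I must argue that it applies componentwise to the Navier system. The honest way to do this is to rewrite $\triangle^*\mathbf{u}+\omega^2\mathbf{u}=\mathbf 0$ as a coupled system and note that the Lamé operator is a uniformly elliptic system with constant coefficients, so the scalar interior estimate extends to it (either by the Agmon–Douglis–Nirenberg theory for elliptic systems, or by observing that the Helmholtz components $\mathbf{u}_p,\mathbf{u}_s$ each satisfy a scalar Helmholtz equation to which Lemma \ref{ElliptRegular} applies directly). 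The remaining care is purely geometric bookkeeping — choosing $\Omega'\Subset\Omega$ so that $\partial D$ sits in the interior region where the regularity estimate holds, which is possible precisely because $\overline{B}\subseteq D$ guarantees a positive distance between $\overline{B}$ and $\partial D$.
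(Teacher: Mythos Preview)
Your proposal is correct and follows essentially the same route as the paper: gain one derivative near $\partial D$ via interior elliptic regularity (Lemma~\ref{ElliptRegular}) for the Navier equation, then use a compact Sobolev embedding to conclude. The only cosmetic difference is where the compact step is placed: the paper takes the trace first, landing in $\left[H^{3/2}(\partial D)\right]^2$, and then uses the compact boundary embedding $\left[H^{3/2}(\partial D)\right]^2\hookrightarrow\left[H^{1/2}(\partial D)\right]^2$, whereas you use the Rellich embedding $\left[H^{2}(\Omega')\right]^2\hookrightarrow\left[H^{1}(\Omega')\right]^2$ on the domain side before tracing. Both factorizations are equivalent, and your explicit acknowledgment that Lemma~\ref{ElliptRegular} is scalar and must be lifted to the Lam\'e system (via constant-coefficient strong ellipticity or the Helmholtz decomposition) is a point the paper leaves implicit.
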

\begin{proof}
 From equation \eqref{eq:ueq}, we know that $\mathbf u$ satisfies the Navier equation in $\mathbb R^2 \backslash \overline{B}$, note that the Navier equation's second-order operator is strictly elliptic, and its coefficients meet the requirements for $\mathcal{L}$ in Lemma \ref{ElliptRegular}. So for a subset $D^{'}\subseteq \mathbb R^2$, with $\overline{D}\subseteq D^{'} $, we can ascertain that $\mathbf u$ also satisfies the Navier equation in $D^{'}\backslash \overline{B}$, and since $\overline{B}\subseteq D$, there exists an $\Omega^{'}\subseteq D^{'} \backslash \overline{B}$ such that $\partial D\subseteq \Omega^{'}$. Therefore, through the Lemma \ref{ElliptRegular}, we can obtain  $\mathbf u\in \left[H^{2}\left(\Omega^{'} \right) \right]^2$. 
 
 Furthermore, by the trace theorem, we can obtain $\mathbf u |_{\partial D} \in \left[H^{3/2}\left(\partial D \right) \right]^2$, so the boundary value mapping operator can be rewritten as
 $$
 \mathbf M_{B \to D} = \mathbf E \mathbf M^{'}_{B \to D}.
 $$
Here, $\mathbf E$ is the compact embedding operator from $\left[H^{3/2}\left(\partial D \right) \right]^2$ to $\left[H^{1/2}\left(\partial D \right) \right]^2$, and $\mathbf M^{'}_{B \to D} := \left[H^{1/2}\left(\partial B \right) \right]^2 \to \left[H^{3/2}\left(\partial D \right) \right]^2$ is defined as
$$
\mathbf M^{'}_{B \to D} \mathbf f:= \mathbf u |_{\partial D}.
$$
Thus, it can be concluded that $ \mathbf M_{B \to D}=\mathbf E \mathbf M^{'}_{B \to D}$ is a compact operator from $\left[H^{1/2}\left(\partial B \right) \right]^2$ to $\left[H^{1/2}\left(\partial D \right) \right]^2$.

The proof is completed.
\end{proof}
Next, for two compact self-adjoint operators, we define a notation regarding the number of negative eigenvalues of their difference.
\begin{defn} \label{defleq} 
	Let $X$ be a Hilbert space, and $A_1,A_2: X \to X $ be compact self-adjoint linear operators, if $A_2-A_1$ has at most $r$ negative eigenvalues (where $r\in \mathbb N$), we define this as
	$$
	A_1 \leq_r A_2.
	$$
\end{defn}

In particular, when the number of negative eigenvalues of $A_2-A_1$ is finite (i.e., we can find $r \in \mathbb N$ such that $A_2-A_1$ satisfies $A_1 \leq_r A_2$), we denote this as $A_1 \leq_{fin} A_2$. Meanwhile, Below, we establish an equivalent condition under which the difference of the two operators has finitely many negative eigenvalues. Moreover, the following lemma is taking from \cite{HPS19b}.
\begin{lem}\label{EquDef}
	Let $X$ be a Hilbert space with the inner product defined as $\left\langle  \cdot , \cdot \right\rangle _X$, and let $r \in \mathbb N$. Then the following two statements are equivalent:
	\begin{itemize}
		\item [(1).] $ A_1 \leq_r A_2 $.
		\item [(2).] There exists a finite-dimensional subspace $V \subseteq X$ and $dim \left( V\right) \leq r$, such that:
		$$
		\left\langle  \left( A_2- A_1\right) v, v \right\rangle _X \geq 0, \quad \forall v \in V^{\perp}.
		$$
	\end{itemize}
\end{lem}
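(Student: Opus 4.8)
The plan is to set $T := A_2 - A_1$, which by assumption is a compact self-adjoint operator on $X$, and to read both statements directly off its spectral decomposition. By the spectral theorem for compact self-adjoint operators, $X$ admits an orthonormal basis of eigenvectors of $T$, the nonzero eigenvalues are real and can accumulate only at $0$, and each nonzero eigenvalue has finite multiplicity. Consequently the negative eigenvalues of $T$ form a list (repeated according to multiplicity) $\lambda_1 \le \lambda_2 \le \cdots < 0$ with associated orthonormal eigenvectors, and the relation $A_1 \le_r A_2$ of Definition \ref{defleq} says exactly that this list has length at most $r$. All of the work then amounts to translating "at most $r$ negative eigenvalues" into the quadratic-form condition in (2) and back.

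For the implication $(1) \Rightarrow (2)$, I would take $V$ to be the span of the orthonormal eigenvectors associated to all of the (at most $r$) negative eigenvalues of $T$, so that $\dim(V) \le r$. Any $v \in V^\perp$ then expands in the remaining eigenvectors, each of which corresponds to an eigenvalue $\ge 0$; expanding $\langle T v, v\rangle_X$ in this orthonormal basis produces a convergent sum of terms $\lambda_j \lvert c_j\rvert^2$ with all $\lambda_j \ge 0$, whence $\langle (A_2 - A_1) v, v\rangle_X \ge 0$. This gives precisely statement (2).

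For the converse $(2) \Rightarrow (1)$, I would argue by contradiction. Suppose $T$ had at least $r+1$ negative eigenvalues counted with multiplicity, and let $W$ be the $(r+1)$-dimensional span of the corresponding eigenvectors. On $W$ the form is strictly negative definite: $\langle T w, w\rangle_X \le \lambda_{r+1}\,\lVert w\rVert_X^2 < 0$ for every nonzero $w \in W$, where $\lambda_{r+1} < 0$ is the least negative of these eigenvalues. The decisive step is a dimension count: since $\dim(W) = r+1 > r \ge \dim(V)$, the restriction to $W$ of the orthogonal projection $P_V \colon X \to V$ cannot be injective, so there exists a nonzero $w \in W$ with $P_V w = 0$, i.e. $w \in V^\perp$. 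For this $w$ we have both $\langle (A_2 - A_1) w, w\rangle_X < 0$ and, by hypothesis (2), $\langle (A_2 - A_1) w, w\rangle_X \ge 0$, a contradiction. Hence $T$ has at most $r$ negative eigenvalues, which is statement (1).

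The main obstacle is the infinite-dimensional setting in $(2) \Rightarrow (1)$: one cannot invoke the finite-dimensional identity $\dim(W) + \dim(V^\perp) > \dim(X)$ to force a nontrivial intersection $W \cap V^\perp$, since $V^\perp$ is typically infinite-dimensional. The projection argument above circumvents this, as it only needs the strict inequality $\dim(W) > \dim(V)$ of two \emph{finite} dimensions and is valid whether or not $X$ is infinite-dimensional. The only remaining care is bookkeeping: negative eigenvalues must be counted with multiplicity throughout, consistent with the convention implicit in Definition \ref{defleq}, so that the spanning spaces $V$ and $W$ have the dimensions claimed.
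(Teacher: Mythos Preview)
Your proof is correct. However, note that the paper does not actually provide its own proof of this lemma: it simply states the result and cites \cite{HPS19b} (Harrach, Pohjola, Salo), so there is no in-paper argument to compare against. Your spectral-decomposition approach---taking $V$ to be the negative eigenspace for $(1)\Rightarrow(2)$, and using the dimension-count/projection argument to produce a nonzero $w\in W\cap V^\perp$ for $(2)\Rightarrow(1)$---is the standard proof of this fact and is exactly what one finds in the cited reference.
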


Next, we utilize the eigenvalue distribution properties of the operator $-\mathbf H^*_B\mathbf H_B-\Re\left(\mathbf F \right) $ to establish a criterion for determining the shape of the obstacle $D$. This decision rule is presented in the following theorem.
\begin{thm}\label{CharacterofObstacle}
	Let $B,D\subseteq \mathbb R^{2}$ be an open, Lipschitz bounded domain such that $\mathbb R^2\backslash D$ is connected,
	\begin{itemize}
		\item [(1).] If $\overline{B}\subseteq D$, then we have $\Re\left( \mathbf F\right) \leq_{fin} -\mathbf H^*_B\mathbf H_B$;
		\item [(2).] If $B\nsubseteq D$, then we have $\Re\left( \mathbf F\right) \nleq_{fin} -\mathbf H^*_B\mathbf H_B$.
	\end{itemize} 
\end{thm}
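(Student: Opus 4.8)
The plan is to reduce both statements, via Lemma \ref{EquDef}, to a sign analysis of the quadratic form associated with $-\mathbf{H}_B^*\mathbf{H}_B-\Re(\mathbf{F})$, and then to exploit the factorization of $\mathbf{F}$ together with the coercivity of $\mathbf{S}_i$ and the compactness results already established. First I would record the consequence of Lemma \ref{Fdecomposition}: since $\mathbf{F}=-\sqrt{8\pi\omega}\,\mathbf{G}\mathbf{S}^*\mathbf{G}^*$ and $(\mathbf{G}\mathbf{S}^*\mathbf{G}^*)^*=\mathbf{G}\mathbf{S}\mathbf{G}^*$, taking real parts gives $\Re(\mathbf{F})=-\sqrt{8\pi\omega}\,\mathbf{G}\,\Re(\mathbf{S})\,\mathbf{G}^*$ with $\Re(\mathbf{S}):=\tfrac12(\mathbf{S}+\mathbf{S}^*)$. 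By Lemma \ref{SProperty}(3)--(4), $\mathbf{S}_i$ is self-adjoint, so $\Re(\mathbf{S})=\mathbf{S}_i+\mathbf{K}_0$ with $\mathbf{K}_0:=\Re(\mathbf{S}-\mathbf{S}_i)$ compact and self-adjoint, and $(\mathbf{S}_i\phi,\phi)\ge c_0\|\phi\|_{[H^{-1/2}(\partial D)]^2}^2$. Writing $\phi:=\mathbf{G}^*\mathbf{g}$, the form to be controlled becomes
\begin{equation}
\langle(-\mathbf{H}_B^*\mathbf{H}_B-\Re(\mathbf{F}))\mathbf{g},\mathbf{g}\rangle=-\|\mathbf{H}_B\mathbf{g}\|^2_{[H^{1/2}(\partial B)]^2}+\sqrt{8\pi\omega}\,\big[(\mathbf{S}_i\phi,\phi)+(\mathbf{K}_0\phi,\phi)\big].
\end{equation}

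For part (1), the crucial observation is that $\overline{B}\subseteq D$ makes the first term a \emph{compact} quadratic form in $\phi$. Indeed, $\mathbf{G}_B=\mathbf{G}\mathbf{M}_{B\to D}$ gives $\mathbf{H}_B=\sqrt{8\pi\omega}\,\mathbf{S}_B^*\mathbf{M}_{B\to D}^*\mathbf{G}^*$, so $\|\mathbf{H}_B\mathbf{g}\|^2=(\phi,\mathbf{K}_B\phi)$ for a self-adjoint $\mathbf{K}_B:[H^{-1/2}(\partial D)]^2\to[H^{1/2}(\partial D)]^2$ assembled from $\mathbf{S}_B^*\mathbf{M}_{B\to D}^*$ and its adjoint; since $\mathbf{M}_{B\to D}$ is compact (Lemma \ref{BoundaryVCompac}), $\mathbf{K}_B$ is compact. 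I would then apply Lemma \ref{DatatoPatternProperty} twice---once to $\mathbf{K}_B$, once to $\mathbf{K}_0$---to produce, for prescribed $c,c'>0$, a finite-dimensional $V\subseteq[L^2(\mathbb{S})]^2$ with $|(\phi,\mathbf{K}_B\phi)|\le c\|\phi\|^2$ and $|(\phi,\mathbf{K}_0\phi)|\le c'\|\phi\|^2$ for all $\mathbf{g}\in V^\perp$. On $V^\perp$ the form is then bounded below by $[\sqrt{8\pi\omega}(c_0-c')-c]\,\|\phi\|^2$, which is nonnegative once $c'<c_0$ and $c<\sqrt{8\pi\omega}(c_0-c')$. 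By Lemma \ref{EquDef} this yields $\Re(\mathbf{F})\leq_{fin}-\mathbf{H}_B^*\mathbf{H}_B$.

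For part (2), I would argue by contraposition of Lemma \ref{EquDef}: it suffices to show that for \emph{every} finite-dimensional $V\subseteq[L^2(\mathbb{S})]^2$ there is $\mathbf{g}\in V^\perp$ making the form strictly negative. Since $B\nsubseteq D$, Theorem \ref{LocialWaveFounc} furnishes a sequence $\{\mathbf{h}_n\}\subseteq V^\perp$ with $\|\mathbf{H}_B\mathbf{h}_n\|_{[H^{1/2}(\partial B)]^2}\to\infty$ and $\|\mathbf{G}^*\mathbf{h}_n\|_{[H^{-1/2}(\partial D)]^2}\to0$. Because $\Re(\mathbf{S})$ is bounded, the remaining term obeys $\sqrt{8\pi\omega}\,|(\Re(\mathbf{S})\mathbf{G}^*\mathbf{h}_n,\mathbf{G}^*\mathbf{h}_n)|\le\sqrt{8\pi\omega}\,\|\Re(\mathbf{S})\|\,\|\mathbf{G}^*\mathbf{h}_n\|^2\to0$, while $\|\mathbf{H}_B\mathbf{h}_n\|^2\to\infty$; hence the form tends to $-\infty$ and is negative for $n$ large, with $\mathbf{g}=\mathbf{h}_n\in V^\perp$. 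As $V$ was arbitrary, no finite-dimensional $V$ can render the form nonnegative on $V^\perp$, so $-\mathbf{H}_B^*\mathbf{H}_B-\Re(\mathbf{F})$ has infinitely many negative eigenvalues, i.e. $\Re(\mathbf{F})\nleq_{fin}-\mathbf{H}_B^*\mathbf{H}_B$.

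The step I expect to be the main obstacle is the rewriting in part (1): one must recognize that compactness of $\mathbf{M}_{B\to D}$ turns $\|\mathbf{H}_B\mathbf{g}\|^2$ into a compact quadratic form in $\mathbf{G}^*\mathbf{g}$ (so that Lemma \ref{DatatoPatternProperty} becomes applicable to it), correctly identify the self-adjoint operator $\mathbf{K}_B$ together with the Riesz identification of $[H^{1/2}(\partial B)]^2$ implicit in $\|\cdot\|^2=(\phi,\mathbf{K}_B\phi)$, and finally balance the two smallness constants $c,c'$ against the fixed coercivity constant $c_0$ so that the residual coefficient stays nonnegative on the common orthogonal complement.
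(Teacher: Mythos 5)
Your proposal is correct and follows essentially the same route as the paper: factorize $\Re(\mathbf F)$ via Lemma \ref{Fdecomposition}, split $\tfrac12(\mathbf S+\mathbf S^*)$ into the coercive $\mathbf S_i$ plus a compact self-adjoint remainder, absorb $\mathbf H_B^*\mathbf H_B=8\pi\omega\,\mathbf G\mathbf M_{B\to D}\mathbf S_B\mathbf S_B^*\mathbf M_{B\to D}^*\mathbf G^*$ as a further compact perturbation using Lemma \ref{BoundaryVCompac}, control it on a finite-codimension subspace via Lemma \ref{DatatoPatternProperty} and conclude with Lemma \ref{EquDef}; part (2) is the same contradiction argument from Theorem \ref{LocialWaveFounc}. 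The only cosmetic difference is that you invoke Lemma \ref{DatatoPatternProperty} twice and balance two constants against $c_0$, whereas the paper bundles both compact pieces into a single operator $\tilde{\mathbf K}$ and applies the lemma once.
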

\begin{proof}
First, we proceed to prove the first part of the theorem.

In equation \eqref{eq:HGSoperator}, by replacing $D$ with $B$, we obtain the relation $\mathbf H^*_B=\sqrt{8\pi\omega}\mathbf G_B \mathbf S_B$. Furthermore, from the definition of the boundary value mapping operator $\mathbf M_{B \to D}$ in \eqref{eq:BVO}, we can obtain $ \mathbf H_B= \sqrt{8\pi\omega}\mathbf S^*_B \left( \mathbf M_{B \to D} \right) ^* \mathbf G^*$, therefore, based on the factorization \eqref{eq:FarfieldOperatorDecomposition} of the far-field operator in Lemma \ref{Fdecomposition}  we can obtain
$$
\Re\left( \mathbf F\right)+\mathbf H^*_B\mathbf H_B = -\sqrt{8\pi\omega}\mathbf G \left[ \frac{1}{2}\left( \mathbf S^*+\mathbf S\right) \right] \mathbf G^*+ 8\pi\omega\mathbf G \left[\mathbf M_{B \to D}\mathbf S_B\mathbf S^*_B\left( \mathbf M_{B \to D}\right) ^*\right] \mathbf G^*,
$$
where
\begin{equation}\begin{aligned}
\frac{1}{2}\left( \mathbf S^*+\mathbf S\right) 
&= \frac{1}{2}\left( 2\mathbf S_{i}+\mathbf S^*+\mathbf S-2\mathbf S_{i}\right)\\
&= \mathbf S_{i} + \frac{1}{2}\left( \left( \mathbf S^*-\mathbf S_{i}\right) +\left( \mathbf S-\mathbf S_{i}\right) \right) .
\end{aligned}\end{equation}
From Lemma \ref{SProperty}, we know that $\mathbf S-\mathbf S_i$ is a compact operator, so if we define $\mathbf K:= \mathbf S-\mathbf S_{i}$ , then $\mathbf K$ is a compact operator. Additionally, since the adjoint of a compact operator remains compact, it follows that $\mathbf S^*-\mathbf S_{i}=\left(\mathbf S-\mathbf S_{i}\right) ^*=\mathbf K^*$ still is a compact operator. Let $\mathbf K_1:=\mathbf K +\mathbf K^*$, then we obtain $\frac{1}{2}\left( \mathbf S^*+\mathbf S\right)=\mathbf S_{i} +\mathbf K_1$, meaning $\frac{1}{2}\left( \mathbf S^*+\mathbf S\right)$ is a compact perturbation of the self-adjoint and coercive operator $\mathbf S_{i}$. Thus, we can derive the following equation
$$
\Re\left( \mathbf F\right)+\mathbf H^*_B\mathbf H_B = -\mathbf G \left[\sqrt{8\pi\omega}\left( \mathbf S_{i} +\mathbf K_1 \right)-8\pi\omega \mathbf M_{B \to D}\mathbf S_B\mathbf S^*_B\left( \mathbf M_{B \to D}\right) ^*\right]\mathbf G^*.
$$
We denote $\mathbf K_2:=8\pi\omega \mathbf M_{B \to D}\mathbf S_B\mathbf S^*_B\left( \mathbf M_{B \to D}\right) ^*$, according to the compactness of the boundary value mapping operator $\mathbf M_{B \to D}$ demonstrated in Lemma \ref{BoundaryVCompac}, it follows that $\mathbf K_2$ is a compact operator from $ \left[H^{1/2}\left(\partial B \right) \right]^2 $ to $\left[H^{1/2}\left(\partial D \right) \right]^2$. Meanwhile, we write $\mathbf{\tilde{K}}=\sqrt{8\pi\omega} \mathbf K_1+\mathbf K_2$, from which we can obtain $\Re\left( \mathbf F\right)+\mathbf H^*_B\mathbf H_B = -\mathbf G \left( \sqrt{8\pi\omega} \mathbf S_{i}+\mathbf{\tilde{K}} \right) \mathbf G^*$, where $\mathbf{\tilde{K}}:\left[H^{1/2}\left(\partial B \right) \right]^2  \to \left[H^{1/2}\left(\partial D \right) \right]^2$ is a self-adjoint compact operator, therefore for any $\mathbf g \in \left[L^{2}\left(\mathbb S \right) \right]^2$, we have
\begin{equation}\begin{aligned}
		\left\langle \left( \Re\left( \mathbf F\right)+\mathbf H^*_B\mathbf H_B\right) \mathbf g, \mathbf g\right\rangle 
		&= \left\langle \left(-\mathbf G \left( \sqrt{8\pi\omega} \mathbf S_{i}+\mathbf{\tilde{K}} \right) \mathbf G^* \right) \mathbf g, \mathbf g\right\rangle  \\
		&= -\left(  \mathbf G^* \mathbf g, \sqrt{8\pi\omega} \mathbf S_{i}\mathbf G^* \mathbf g \right)  -\left(  \mathbf G^* \mathbf g, \mathbf{\tilde{K}}\mathbf G^* \mathbf g \right)  \\
		&\leq -c || \mathbf G^* \mathbf g ||^2_{\left[H^{-1/2}\left(\partial D \right) \right]^2} - \left(  \mathbf G^* \mathbf g, \mathbf{\tilde{K}}\mathbf G^* \mathbf g \right) ,
\end{aligned}\end{equation}
	where the last inequality is derived from the coerciveness of $\mathbf S_{i}$ from lemma \ref{SProperty}. For the second term on the right-hand side of the above equation, according to Lemma \ref{DatatoPatternProperty}, we know that there exists a finite-dimensional subspace $V \subseteq \left[L^{2}\left(\mathbb S \right) \right]^2$ such that
	$$
	 - \left(  \mathbf G^* \mathbf g, \mathbf{\tilde{K}}\mathbf G^* \mathbf g \right) \leq c ||\mathbf G ^* \mathbf \mathbf g||_{\left[H^{-1/2}\left(\partial D\right) \right] ^2} ^2, \qquad \forall \mathbf g \in V^\perp,
	$$
	Thus we can obtain 
	$$
	\left\langle \left( \Re\left( \mathbf F\right)+\mathbf H^*_B\mathbf H_B\right) \mathbf g, \mathbf g\right\rangle  \leq -c || \mathbf G^* \mathbf g ||^2_{\left[H^{-1/2}\left(\partial D \right) \right]^2} + c ||\mathbf G ^* \mathbf \mathbf g||_{\left[H^{-1/2}\left(\partial D\right) \right] ^2} ^2=0,\qquad \forall \mathbf g \in V^\perp.
	$$
	Therefore, according to Lemma \ref{EquDef}, it can be concluded that the first part of this theorem has been proved.
	
	Next, we employ a proof by contradiction to begin demonstrating the second part of this theorem.
	
	If $B\nsubseteq D$, suppose there exists a finite-dimensional subspace $V_1\in \left[L^{2}\left(\mathbb S \right) \right]^2$ such that
	$$
		\left\langle \left( \Re\left( \mathbf F\right)+\mathbf H^*_B\mathbf H_B\right) \mathbf g, \mathbf g\right\rangle \leq 0, \qquad \forall \mathbf g \in V_1^\perp
	.$$
	From the definitions of the inner product and norm in Hilbert space, we can deduce that
	\begin{equation}\label{eq:InequalityA}
		\left\langle \mathbf H^*_B\mathbf H_B \mathbf g ,\mathbf g\right\rangle =||\mathbf H_B \mathbf g||^2_{\left[H^{1/2}\left(\partial B \right) \right]^2}.
	\end{equation}
Meanwhile, since $\Re\left( \mathbf F\right) = -\sqrt{8\pi\omega}\mathbf G \left[ \frac{1}{2}\left( \mathbf S^* + \mathbf S\right) \right] \mathbf G^*$, we can infer that
\begin{equation}\begin{aligned}\label{eq:InequalityB}
-\left\langle \Re\left( \mathbf F\right) \mathbf g, \mathbf g\right\rangle
&\leq |\left\langle \Re\left( \mathbf F\right) \mathbf g, \mathbf g\right\rangle| \\
&= \left| \left( -\frac{\sqrt{8\pi\omega}}{2} \left( \mathbf S^*+\mathbf S\right) \mathbf G^* \mathbf g, \mathbf G^* \mathbf g \right) \right| \\
&\leq \frac{\sqrt{8\pi\omega}}{2} ||\mathbf S^*+\mathbf S ||_{\left[H^{-1/2}\left(\partial D\right) \right] ^2 \to \left[H^{1/2}\left(\partial D\right) \right] ^2} ||\mathbf G^* \mathbf g||^2_{\left[H^{-1/2}\left(\partial D \right) \right]^2}\\
&\leq c^{'}||\mathbf G^* \mathbf g||_{\left[H^{-1/2}\left(\partial D \right) \right]^2}.
\end{aligned}\end{equation}
It is straightforward to observe that the above $c^{'} > 0 $. Combining \eqref{eq:InequalityA} and \eqref{eq:InequalityB}, we can deduce that
	$$
	\left\langle \left( \Re\left( \mathbf F\right)+\mathbf H^*_B\mathbf H_B\right) \mathbf g, \mathbf g\right\rangle \geq - c^{'}||\mathbf G^* \mathbf g||^2_{\left[H^{-1/2}\left(\partial D \right) \right]^2} + ||\mathbf H_B \mathbf g||^2_{\left[H^{1/2}\left(\partial B \right) \right]^2},
	$$
	For the right-hand side of the above inequality, according to Theorem \ref{LocialWaveFounc}, we know that there exists a sequence $\left\lbrace \mathbf g_n \right\rbrace \subseteq V_1^\perp$ such that $||\mathbf G^* \mathbf g||_{\left[H^{-1/2}\left(\partial D \right) \right]^2} \to 0$ and $||\mathbf H_B \mathbf g||^2_{\left[H^{1/2}\left(\partial B \right) \right]^2} \to \infty$ as $n\to \infty$, so we can obtain that there exists $\mathbf g \in V_1^\perp $ such that $ \left\langle \left( \Re\left( \mathbf F\right)+\mathbf H^*_B\mathbf H_B\right) \mathbf g, \mathbf g\right\rangle > 0$, this contradicts our assumption, thus completing the proof of the second part of the theorem.
\end{proof}

\section{Conclusion and future works}

For the inverse elastic wave scattering problem with Dirichlet boundary conditions, we have proposed a monotonicity-based shape characterization method to reconstruct the shape of rigid obstacle scatterers. In future work, we will attempt to extend this method to inverse elastic wave scattering problems with Neumann boundary conditions. Furthermore, we will consider shape reconstruction of obstacles under mixed boundary conditions and validate the effectiveness of our approach through numerical examples.




\begin{thebibliography}{99}


\bibitem{AG20}
A. Albicker and R. Griesmaier, {\it Monotonicity in inverse obstacle scattering on unbounded domains}, Inverse Problems, \textbf{36} (2020), 085014.

\bibitem{AG23}
A. Albicker and R. Griesmaier, {\it Monotonicity in inverse scattering for maxwell’s equations}, Inverse Problems Imaging, \textbf{17} (2023), 68–105.

\bibitem{Aren01}
T. Arens, {\it Linear sampling methods for 2D inverse elastic wave scattering}, Inverse Problems, \textbf{17(5)} (2001), 1445.

\bibitem{CKAKGDK06}
A. Charalambopoulos, A. Kirsch, K. A. Anagnostopoulos, D. Gintides, K. Kiriaki, {\it The factorization method in inverse elastic scattering from penetrable bodies}, Inverse Problems, \textbf{23(1)} (2006), 27.

\bibitem{CK96}
D. Colton and A. Kirsch, {\it A simple method for solving inverse scattering problems in the resonance region}. Inverse Problems, \textbf{12(4)} (1996), 383–393.

\bibitem{EH21}
S. Eberle and B. Harrach, {\it Shape reconstruction in linear elasticity: standard and linearized monotonicity method}, Inverse Problems,\textbf{37} (2021), 045006.

\bibitem{EH23}
S. Eberle and B. Harrach, {\it Resolution guarantees for the reconstruction of inclusions in linear elasticity based on monotonicity methods}, Inverse Problems, \textbf{39} (2023), 075006.

\bibitem{EH24}
S. Eberle and B. Harrach, {\it The monotonicity method for inclusion detection and the time harmonic elastic wave equation}, Inverse Problems, \textbf{40} (2024), 045018.

\bibitem{GT01}
D. Gilbarg and N. S. Trudinger, {\it Elliptic Partial Differential Equations of Second Order}, Grundlehren der mathematischen Wissenschaften, vol. 224, Springer, Berlin, 2001.

\bibitem{JQG18}
J. Guo, Q. Wu, G. Yan, {\it The factorization method for cracks in elastic scattering}, Inverse Problems Imaging, \textbf{12(2)} (2018), 349–371.

\bibitem{BU13}
B. Harrach and M. Ullrich, {\it Monotonicity-based shape reconstruction in electrical impedance tomography}, SIAM J. Math. Anal. ,\textbf{45} (2013), 3382–403.

\bibitem{HPS19b}
B. Harrach, V. Pohjola and M. Salo, {\it Monotonicity and local uniqueness for the Helmholtz equation}, Anal. PDE ,\textbf{12(7)} (2019), 1741–1771.

\bibitem{HKS12}
G. Hu, A. Kirsch, and M. Sini, {\it Some inverse problems arising from elastic scattering by rigid obstacles},  Inverse Problems, \textbf{29(1)} (2012), 015009.

\bibitem{Kris88}
A. Kirsch, {\it Characterization of the shape of a scattering obstacle using the spectral data of the far field operator}, Inverse problems, \textbf{14(6)} (1998), 1489.
 
\bibitem{Kress96}
R. Kress, {\it Inverse elastic scattering from a crack}, Inverse problems, \textbf{12} (1996), 667-84.
 
\bibitem{HH93}
P. H\"ahner and G. Hsiao, {\it Uniqueness theorems in inverse obstacle scattering of elastic waves}, Inverse problems, \textbf{9} (1993), 525-534.
 
\bibitem{Roge81}
A. Roger, {\it Newton-Kantorovitch algorithm applied to an electromagnetic inverse problem}. IEEE Trans. Antennas Prop., \textbf{29(2)} (1981), 232–238.
 
\bibitem{TR02}
A. Tamburrino and G. Rubinacci, {\it A new non-iterative inversion method for electrical resistance tomography}, Inverse Problems, \textbf{18} (2002), 1809–29.
 
 


\end{thebibliography}
\end{document}